\documentclass{amsart}
\usepackage{IMjournal,amsmath,amsthm,amssymb}
\usepackage[font=small]{subcaption}
\usepackage{url, enumitem, tikz, delimset}
\usepackage[hidelinks]{hyperref}
\usepackage{verbatim}

\newcommand{\interior}{\operatorname{int}}
\newcommand{\arcsinh}{\operatorname{arcsinh}}

\newcommand{\ow}{\overline{w}}
\newcommand{\uw}{\underline{w}}

\newcommand{\ov}{\overline{v}}

\newcommand{\oc}{\overline{c}}

\newcommand{\bU}{\mathbf{U}}
\newcommand{\bF}{\mathbf{F}}

\newcommand{\bW}{\mathbf{W}}

\newcommand{\bZ}{\mathbf{Z}}
\newcommand{\bz}{\mathbf{0}}

\newcommand{\calU}{\mathcal{U}}
\newcommand{\calK}{\mathcal{K}}
\newcommand{\calZ}{\mathcal{Z}}

\newcommand{\calW}{\mathcal{W}}

\newcommand{\R}{\mathbb R}
\newcommand{\N}{\mathbb N}
\newcommand{\Z}{\mathbb Z}

\renewcommand{\L}[1]{\mathbf{L^{#1}}}
\newcommand{\Lloc}[1]{\mathbf{L^{#1}_{loc}}}
\newcommand{\BV}{\mathbf{BV}}
\newcommand{\C}[1]{\mathbf{C^{#1}}}
\newcommand{\Cc}[1]{\mathbf{C_c^{#1}}}
\renewcommand{\d}{{\rm{d}}}
\DeclareMathOperator \tv {TV}
\newcommand{\TV}[1]{\tv[\,#1\,]}
\newcommand{\RS}{\mathsf{RS}}

\begin{document}


\title{On a $2\times2$ system of conservation laws\\with discontinuous flux}

\author{\|Felisia Angela |Chiarello|\ORC{0000-0002-5506-9086}, 
        \|Simone |Fagioli|\ORC{0000-0002-8581-1421}, 
        \|Bastien |Polizzi|\ORC{0000-0003-0535-1443}, 
        \|Massimiliano Daniele |Rosini|\ORC{0000-0003-2930-5740}}
\thanks
F.A.C., S.F.\ and M.D.R.\ are members of Gruppo Nazionale per l’Analisi Matematica, la Probabilit\`a e le loro Applicazioni (GNAMPA) of the Istituto Nazionale di Alta Matematica (INdAM). 
S.F.\ is partially supported by the Italian \lq\lq National Centre for HPC, Big Data and Quantum Computing\rq\rq\ - Spoke 5 \lq\lq Environment and Natural Disasters\rq\rq, by the InterMaths Network, \url{www.intermaths.eu}, by the INdAM-GNAMPA project 2026 code CUP E53C25002010001 \lq\lq Modelli di reazione-diffusione-trasporto: dall'analisi alle applicazioni\rq\rq. M.D.R.~acknowledges financial support from \lq\lq INdAM - GNAMPA Project\rq\rq, CUP~E5324001950001.
\endthanks

\rec{September 30, 2026}

\abstract 
We study a one-dimensional $2\times2$ system of conservation laws with discontinuous flux.
We prove the global existence of weak solutions for initial data with sufficiently small total variation. 
We construct explicitly the associated Riemann solver and analyze its main properties. 
Finally, we introduce a Glimm-type numerical scheme tailored to the exact Riemann solver and use it to reproduce the wave patterns obtained analytically. 
\endabstract

\keywords
   $2\times2$ systems of conservation laws; 
   non-symmetric Keyﬁtz-Kranzer systems; 
   discontinuous flux; 
   existence theory; 
   macroscopic vehicular traffic modelling
\endkeywords

\subjclass
35L65, 
35L45, 
35A01, 
90B20 
\endsubjclass

\section{Introduction}
We study a one-dimensional $2\times2$ non-symmetric Keyfitz-Kranzer system of conservation laws with discontinuous flux, see \cite{KeyfitzKranzer1980}. The model can be interpreted as a generalization of the Aw-Rascle-Zhang model \cite{AwRascle2000,Zhang2002} in the presence of spatially discontinuous road conditions.

We establish a global existence result for weak solutions for data with sufficiently small total variation. The result is obtained by embedding the problem into an augmented $3\times3$ strictly hyperbolic system and applying the general existence theory developed in \cite{AnconaMarson-2007}. This allows us to treat a rather general class of pressure laws $p$.

The case of a logarithmic pressure law has been studied in \cite{ChiarelloFagioliRosini1}. On the one hand, the wave-front tracking method exploited in \cite{ChiarelloFagioliRosini1} has the advantage of being constructive. On the other hand, it becomes technically challenging when the pressure law is changed. Here, instead, the existence is proved for more general pressure laws.

Our second aim is to construct the associated Riemann solver. In addition to the classical waves, an additional stationary discontinuity appears at the flux discontinuity points. We prove that the resulting solver is coherent and $\Lloc1$-continuous with respect to the initial data.

We complement the analytical study with a Glimm-type numerical approximation based on random choice methods, which are particularly well suited to problems in which contact discontinuities and interface waves must be captured sharply, see for instance \cite{chalonsgoatin, Collela_SIAJSSC_1982,Glimm1965,Toro1997}.

The paper is organized as follows. In Section~\ref{s:2} we introduce the system, its main properties, and prove the global existence result in Theorem~\ref{t:main}. Section~\ref{s:3} is devoted to the construction of the Riemann solver. In Section~\ref{s:4} we discuss representative Riemann problems and the corresponding wave patterns. Section~\ref{s:5} contains the numerical approximation and the comparison with the exact solutions. 

\section{The system and main properties}
\label{s:2}

The aim of this paper is the analysis of the Cauchy problem for a one-dimensional $2\times2$ non-symmetric Keyﬁtz-Kranzer type system of conservation laws with discontinuous flux
\begin{equation}
\left\{\begin{array}{@{}l@{}}
\partial_t \bU + \partial_x \left( c \, V\left(\frac{q}{\rho} - p(\rho)\right) \, \bU \right)=\bz,
\\[5pt]
\bU(0,x) = \bU_o(x),
\end{array}\right.
\label{e:2x2}
\end{equation}
for the conservative variable $\bU \doteq (\rho,q)$.
Here, the function $c \colon \R \to (0,\infty)$ is in $\BV$, $p \colon D_p \to \R$ and $V \colon D_V \to [0,\infty)$ are smooth functions on the intervals $D_p, D_V \subseteq [0,\infty)$.
We assume that $0\in D_V$ and that for any $\rho \in D_p\setminus\{0\}$ and $v \in D_V$
\begin{align}
\label{e:p}
&&
p'(\rho)&>0,&
\\
\label{e:V}
V(0)&=0,&
V'(v) &>0,&
\intertext{and the compatibility condition}
\label{e:pV1}
&&
\frac{V''(v)}{V'(v)} &< \frac{2p'(\rho)+\rho\,p''(\rho)}{\rho \, p'(\rho)^2}.
\end{align}
Take $I_p \doteq p(D_p)$, and observe that the inverse function $p^{-1} \colon I_p \to D_p$ is well defined and strictly increasing by \eqref{e:p}.

We let the conservative variables $\bU \doteq (\rho,q)$ vary in the domain
\begin{equation}
\label{e:U}
\calU \doteq \left\{ (\rho,q) \in \calU_\rho \times \calU_q : v(\rho,q) \in D_V , \ w(\rho,q) \in I_p \right\},
\end{equation}
where $\calU_\rho \subseteq D_p \setminus\{0\}$ and $\calU_q \subseteq \{ \rho\,w \in \R : \rho \in D_p\setminus\{0\}, \ w \in I_p\}$ are given intervals, and
\begin{align}
\label{e:vw}
v(\rho,q) &\doteq \frac{q}{\rho}-p(\rho),&
w(\rho,q) &\doteq \frac{q}{\rho}.
\end{align}
The eigenvalues and the corresponding eigenvectors of system \eqref{e:2x2} are
\begin{align}
\label{e:lam1}
\lambda_{1}(\bU,c) &= c \, \left( V\left(v(\bU)\right) - \rho \, p'(\rho) \, V'\left(v(\bU)\right) \right),&
r_{1}(\bU) &= \bU,
\\
\nonumber
\lambda_{2}(\bU,c) &= c \, V\left(v(\bU)\right),&
r_{2}(\bU) &= 
\left( 
\rho,
q + \rho^2 \, p'(\rho)
\right).
\end{align}
By \eqref{e:p} and \eqref{e:V}\textsubscript{$2$}, system \eqref{e:2x2} is strictly hyperbolic in $\calU$.
Moreover, the first and second characteristic fields are respectively genuinely nonlinear and linearly degenerate because 
\begin{align*}
D_{\bU}\lambda_1(\bU,c) \cdot r_1(\bU) &= \alpha(\bU,c),&
D_{\bU}\lambda_2(\bU,c) \cdot r_2(\bU) &= 0,
\end{align*}
where $\alpha(\bU,c) \doteq c \, \rho \, \left( \rho \, p'(\rho)^2 \, V''\left(v(\bU)\right) - \left( 2 p'(\rho) + \rho \, p''(\rho) \right) V'\left(v(\bU)\right) \right)$ does not vanish for any $\bU \in \calU$ by \eqref{e:pV1}.

The pair of functions $(v,w)$ given in \eqref{e:vw} forms a coordinate system of Riemann invariants for~\eqref{e:2x2} by \cite[Theorem~7.3.3]{Dafermos-book} because for any $\bU \in \calU$
\begin{align*}
D_{\bU}w (\bU) \cdot r_2(\bU) &= \rho \, p'(\rho) \neq 0,
&D_{\bU}w(\bU) \cdot r_1(\bU) &= 0,
\\
D_{\bU}v(\bU) \cdot r_1(\bU) &= -\rho \, p'(\rho) \neq 0,
&D_{\bU}v(\bU) \cdot r_2(\bU) &= 0,
\end{align*}
by \eqref{e:p}.
In particular $w$ and $v$ are respectively the first and second \emph{Riemann invariants}, see \cite[Definition~7.3.1]{Dafermos-book}.
Notice that the eigenvectors and the Riemann invariants do not depend on $c$.

For completeness, we recall the following definition.
\begin{definition}
\label{d:weak_sol}
Fix $\bU_o = (\rho_o,q_o) \in \L{\infty}(\R; \calU)$.
A function $\bU$ in $\L\infty(\R_+ \times \R; \calU)\cap \C0(\R_+ ;\Lloc{1}(\R; \calU))$ is a weak solution of the Cauchy problem \eqref{e:2x2} if for any test function $\varphi \in \Cc{\infty} \left( [0,\infty)\times \R; \R\right)$ we have
\begin{equation*}
\int_{\R_+}\int_\R \left( \partial_t\varphi + c \, V(v(\bU)) \, \partial_x\varphi\right) \, \bU \, \d x \, \d t + \int_\R \bU_o(x) \, \varphi(0,x) \, \d x = \bz.
\end{equation*}
\end{definition}
We are now in a position to state our existence result, which holds in the interior of the domain $\calU$ given in \eqref{e:U}, namely in
\begin{equation}
\label{e:Ue}
\calU_e \doteq \interior(\calU),
\end{equation}
and under the further compatibility condition: for any $\rho \in D_p\setminus\{0\}$ and $v\in D_V$ we have
\begin{equation}
\label{e:pV2}
\frac{V(v)}{V'(v)} < \rho \, p'(\rho).
\end{equation}

\begin{theorem}
\label{t:main}
Assume that $p \colon D_p \to \R$ and $V \colon D_V \to [0,\infty)$ are $\C2$ and satisfy the conditions listed in \eqref{e:p}, \eqref{e:V}, \eqref{e:pV1} and \eqref{e:pV2}.
Then, for any given compact subsets $\calK_c$ of $(0,\infty)$ and $\calK_{\bU}$ of $\calU_e$ given in \eqref{e:Ue}, there exists a constant $\delta>0$ such that, for every coefficient function $c \in \BV\cap\Lloc1(\R;(0,\infty))$ and initial datum $\bU_o \in \Lloc1(\R;\calU_e)$ satisfying the following conditions
\begin{align}
\label{e:Uc_cond}
&\TV{\bU_o} + \TV{c} \leq \delta,&
&\lim_{x\to-\infty} c(x) \in \calK_c,&
&\lim_{x\to-\infty} \bU_o(x) \in \calK_{\bU},
\end{align}
the corresponding Cauchy problem \eqref{e:2x2} admits a globally defined weak solution taking values in $\calU_e$.
\end{theorem}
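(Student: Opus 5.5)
We follow the strategy announced in the Introduction. The first step is to embed \eqref{e:2x2} into the augmented $3\times3$ system for $\bW \doteq (\rho,q,c)$:
\begin{align*}
\partial_t \rho + \partial_x\bigl(c\,V(v(\bU))\,\rho\bigr) &= 0,&
\partial_t q + \partial_x\bigl(c\,V(v(\bU))\,q\bigr) &= 0,&
\partial_t c &= 0.
\end{align*}
The third equation is the trivial conservation law $c_t=0$. Its solution is $c(t,x)=c(x)$, so any weak solution of the augmented system with data $(\bU_o,c)$ gives a weak solution of \eqref{e:2x2} in the sense of Definition~\ref{d:weak_sol}. We then apply the general existence result of \cite{AnconaMarson-2007}. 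That result gives global existence of weak solutions for small total variation data around a compact set of states for systems $\partial_t \bW + \partial_x \bF(\bW)=\bz$ that are strictly hyperbolic, with each characteristic field either genuinely nonlinear or linearly degenerate, or more generally satisfying their structural assumptions. Since $\TV{(\bU_o,c)} \le \TV{\bU_o}+\TV{c} \le \delta$ and the left limits lie in $\calK_{\bU}\times\calK_c$, which is compact in $\calU_e\times(0,\infty)$, the smallness assumption \eqref{e:Uc_cond} is exactly of the required form.

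The second step is the spectral analysis of the augmented system. Its Jacobian has the block form
\[
\begin{pmatrix} c\,D_{\bU}(V\bU) & V(v(\bU))\,\bU\\ 0 & 0\end{pmatrix},
\]
with eigenvalues $\lambda_1(\bU,c)$ and $\lambda_2(\bU,c)$ from \eqref{e:lam1}, plus $\lambda_0=0$. The eigenvectors $(r_1,0)$ and $(r_2,0)$ keep their previous character: $r_1$ is genuinely nonlinear by \eqref{e:pV1} and $r_2$ is linearly degenerate. The new field with $\lambda_0=0$ is linearly degenerate, since $\lambda_0$ is constant.

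For strict hyperbolicity we need $0$ to stay away from $\lambda_1$ and $\lambda_2$, uniformly on compact sets. We have $\lambda_2 = cV(v)$, and since $V(0)=0$ with $V'>0$, this is positive for $v>0$. In the interior $\calU_e$ the state should avoid $v=0$, or else $v\in D_V$ forces $v>0$ in the interior. This is where restricting to $\calU_e$ is used. Condition \eqref{e:pV2} gives $V - \rho p' V' < 0$, so $\lambda_1<0<\lambda_2$. Hence $\lambda_1<\lambda_0<\lambda_2$, and the system is strictly hyperbolic on $\calU_e\times(0,\infty)$ with a uniform gap on compact subsets.

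The third step, and the main obstacle, is the regularity required by \cite{AnconaMarson-2007}. Their framework typically needs $\C2$ fluxes, and we have this because $p,V\in\C2$ makes $\bF$ of class $\C2$. Their theorem also yields solutions with values in a small neighbourhood of the compact set. The delicate point is choosing $\delta$ so that the solution never leaves $\calU_e$. To handle this, we first fix a compact neighbourhood $\calK'\subset\calU_e\times(0,\infty)$ of $\calK_{\bU}\times\calK_c$ on which strict hyperbolicity holds with uniform constants. We then take $\delta$ from the theorem of \cite{AnconaMarson-2007} relative to $\calK'$. Their uniform $\L\infty$ and total variation bounds keep the solution in $\calK'$.

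Finally, we check that the $\Lloc1$-continuity in time and the weak formulation transfer to the first two components, and that $c$ is time independent. This last point holds because the third equation forces $\partial_t c=0$ in the sense of distributions, which concludes the proof.
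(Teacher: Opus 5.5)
Your proposal is correct and follows the paper's proof essentially step by step. The paper also adjoins the trivial equation $\partial_t\zeta=0$ with $\zeta(0,\cdot)=c$, obtains $\lambda_1<0=\lambda_c<\lambda_2$ from \eqref{e:V} and \eqref{e:pV2}, with the first field genuinely nonlinear by \eqref{e:pV1} and the other two linearly degenerate. It then invokes \cite[Theorem~2]{AnconaMarson-2007} and recovers $\zeta\equiv c$ from the weak form of the third equation.

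Your extra care about keeping the solution inside the domain goes beyond the paper, which leaves this point implicit. The real argument there is that the solution oscillates by $O(\delta)$ around the left-limit state, so you should shrink $\delta$ until that neighbourhood of $\calK_{\bU}\times\calK_c$ lies in $\calK'$, rather than taking $\delta$ ``relative to $\calK'$''.
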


\begin{proof}
Consider the Cauchy problem for a $3\times3$ system of conservation laws
\begin{equation}
\label{e:3x3}
\begin{cases}
\partial_t\bZ+\partial_xG(\bZ)=\bz,
\\
\bZ(0,x)=\bZ_o(x),
\end{cases}
\end{equation}
where both the conserved quantity $\bZ \doteq (\bU,\zeta) = (\rho,q,\zeta)$ and the initial datum $\bZ_o$ take values in the \emph{open} domain
\begin{equation}
\label{e:Ze}
\calZ_e \doteq \{ (\rho,q,\zeta) \in \R^3 : \rho \in \interior(D_p),\ v(\rho,q) \in \interior(D_V),\ \zeta>0 \},
\end{equation}
and the flux $G \colon \calZ_e \to \R^3$ is defined by $G(\bU,\zeta) \doteq 
\zeta \, V\left( v(\bU) \right) 
(\bU,0)^T$, where $v$ is defined in \eqref{e:vw}\textsubscript{$1$}.
Observe that $\bz \notin \calZ_e$.
Moreover, $G$ is $\C2$ in $\calZ_e$.
The eigenvalues and the corresponding eigenvectors of the system of conservation laws in \eqref{e:3x3} are
\begin{gather*}
\lambda_1(\bZ)=\zeta \, \left(V\left(v(\bU)\right) - \rho \, p'(\rho) \, V'\left(v(\bU)\right) \right),
\quad
R_1(\bZ)=\left(\begin{array}{@{}c@{}}
\bU \\
0 \\
\end{array}\right),
\\
\lambda_c(\bZ)=0,
\quad
R_c(\bZ)=\left(\begin{array}{@{}c@{}}
\bU \\
\zeta \, \left( \frac{\rho \, p'(\rho) \, V'\left(v(\bU)\right)}{V\left(v(\bU)\right)} - 1 \right) \\
\end{array}\right),
\\
\lambda_2(\bZ)=\zeta \, V\left(v(\bU)\right),
\qquad
R_2(\bZ)=\left(\begin{array}{@{}c@{}}
\rho \\
q + \rho^2 \, p'(\rho) \\
0 \\
\end{array}\right).
\end{gather*}
System \eqref{e:3x3} is strictly hyperbolic in $\calZ_e$ because $\lambda_1(\bZ) < 0 = \lambda_c(\bZ) <  \lambda_2(\bZ)$ by \eqref{e:V} and \eqref{e:pV2}. 
The characteristic fields related to $\lambda_c$ and $\lambda_2$ are linearly degenerate because $\nabla\lambda_c\cdot r_c \equiv 0$ and $\nabla\lambda_2\cdot r_2 \equiv 0$.
The characteristic field corresponding to $\lambda_1$ is genuinely non-linear because by \eqref{e:pV1} we have
\[
\nabla\lambda_1(\bZ) \cdot r_1(\bZ) = 
\zeta \, \rho \, \left(
\rho \, p'(\rho)^2 \, V''\left(v(\bU)\right) 
- \left( 2 p'(\rho) + \rho \, p''(\rho) \right) V'\left(v(\bU)\right) 
\right) \neq 0.
\]

System \eqref{e:3x3} satisfies the hypotheses of \cite[Theorem~2]{AnconaMarson-2007}, hence for any given compact subset $\calK$ of $\calZ$, there exists a constant $\delta>0$ such that, for every initial datum $\bZ_o \in \Lloc1(\R)$ satisfying the following conditions
\begin{align}
\label{e:Z_cond}
&\TV{\bZ_o} \leq \delta,&
&\lim_{x\to-\infty} \bZ_o(x) \in \calK,
\end{align}
the Cauchy problem~\eqref{e:3x3} admits a weak solution defined globally in time.

We now take $\bZ_o = (\bU_o,c) = (\rho_o, q_o, c)$.
By \eqref{e:Uc_cond} we have that $\bZ_o \in \Lloc1(\R)$ satisfies \eqref{e:Z_cond} with $\calK = \calK_{\bU} \times \calK_c$.
Furthermore, $\bZ_o$ takes values in $\calZ_e$ defined in \eqref{e:Ze} because $\bU_o$ takes values in $\calU_e$ given in \eqref{e:Ue} and $c$ in $(0,\infty)$.
The Cauchy problem \eqref{e:3x3} corresponding to such initial datum then admits a weak solution $\bZ = (\bU,\zeta) = (\rho,q,\zeta)$ defined for all $t\geq0$, which then satisfies the following condition: for any test function $\varphi \in \Cc\infty([0,\infty)\times\R;\R)$
\begin{align*}
&\int_{\R_+} \int_{\R} \left( \bU \, \partial_t\varphi + \zeta \, V\left(\frac{q}{\rho}-p(\rho)\right) \, \bU \, \partial_x\varphi \right) \d x \, \d t + \int_{\R} \bU_o(x) \, \varphi(0,x) \d x = \bz,\\
&\int_{\R_+} \int_{\R} \zeta \, \partial_t\varphi \, \d x \, \d t + \int_{\R} c(x) \, \varphi(0,x) \d x = 0.
\end{align*}
The last condition implies that $\zeta \equiv c$ a.e.~in $\R_+ \times \R$.
Therefore, by substituting $c$ for $\zeta$ in the first two equations, we deduce that $\bU = (\rho,q)$ is a weak solution of \eqref{e:2x2} in the sense of Definition~\ref{d:weak_sol} and taking values in $\calU_e$.
\end{proof}

\section{The Riemann solver}
\label{s:3}

In this section we assume that $p$ and $V$ satisfy \eqref{e:p}, \eqref{e:V} and \eqref{e:pV1}.
For simplicity in the exposition, we consider the coordinate system $\bW \doteq (\rho,w)$.
With a slight abuse of notation, we define
\begin{equation}
\label{e:v}
v(\rho,w) \doteq w-p(\rho).
\end{equation}
Away from the vacuum, the system of conservation laws in \eqref{e:2x2} is equivalent to
\begin{subequations}
\label{e:Riemann}
\begin{equation}
\left\{\begin{array}{@{}l@{}}
\partial_t \rho + \partial_x \left( c \, V\left(v(\rho,w)\right) \, \rho \right)=0,
\\[5pt]
\partial_t (\rho \, w) + \partial_x \left(c \, V\left(v(\rho,w)\right) \rho \, w\right)=0.
\end{array}\right.
\label{e:system}
\end{equation}
Unlike \eqref{e:2x2}, the above system of conservation laws allows for solutions involving a vacuum state whenever $0 \in D_p$.
The aim of this section is to investigate the Riemann problem associated with \eqref{e:system}, namely the Cauchy problem for \eqref{e:system} with initial condition
\begin{align}
&\bW(0,x) = \bW_o(x),&
&\bW_o(x) \doteq \begin{cases}
\bW_L&\hbox{if } x<0,\\
\bW_R&\hbox{if } x\geq0,
\end{cases}
\label{e:iniRiemann}
\end{align}
and with
\begin{equation}
\label{e:cRiemann}
c(x) = \begin{cases}
c^-&\hbox{if } x<0,\\
c^+&\hbox{if } x\geq0,
\end{cases}
\end{equation}
\end{subequations}
where $c^\pm > 0$, $\bW_L=(\rho_L,w_L) \in \calW(\ov^-)$ and $\bW_R = (\rho_R,w_R) \in \calW(\ov^+)$ are given constants.
Here, the domains $\calW(\ov^\pm)$ are defined as follows.
Fix $\uw, \ow \in \R$ and $\ov^\pm \in (0,\infty)$ so that
\begin{align}
\label{e:DevinTownsend}
&\uw < \ow,&
&\uw - \ov^\pm \in I_p,&
&\ow \in I_p,&
&\ov^\pm \in D_V.
\end{align}
We then take
\[\calW(\ov^\pm) \doteq \left\{ (\rho,w) \in \left[ p^{-1}(\uw-\ov^\pm) , p^{-1}(\ow) \right] \times \left[ \uw , \ow \right] : v(\rho,w) \in \left[ 0 , \ov^\pm \right]\right\}.\]
For later use, take
\begin{equation}
\label{e:ov}
\ov \doteq \max\{\ov^-,\ov^+\}.
\end{equation}

\begin{remark}
If $(\rho,w) \in \calW(\ov^\pm)$ then $\rho \in D_p$, $w-v(\rho,w) \in I_p$ and $\rho \in [0,p^{-1}(w)]$.
Indeed, the first condition follows by observing that $\left[ p^{-1}(\uw-\ov^\pm) , p^{-1}(\ow) \right] \subseteq D_p$ by \eqref{e:DevinTownsend}\textsubscript{$2,3$}.
The second condition follows by observing that $w-v(\rho,w) = p(\rho) \in I_p$ by \eqref{e:v} and the fact that $\rho \in D_p$.
The last condition follows from the following argument.
The hypothesis $v(\rho,w) \in [0,\ov^\pm]$ implies that $v(\rho,w) \geq 0$, then by the monotonicity of $p$, see \eqref{e:p}, we have that $\rho \leq p^{-1}(w)$.
We conclude then by observing that $\rho \in D_p \subseteq [0,\infty)$.
\end{remark}

\begin{remark}
If a vehicle is characterized by the Lagrangian marker $w$, then it has length $1/p^{-1}(w)$.
Indeed, if vehicles with the same Lagrangian marker $w$ are bumper-to-bumper, then their velocity is zero, $V(v)=0$.
By \eqref{e:V} this corresponds to $v=0$ and then by \eqref{e:v} to the density $\rho = p^{-1}(w)$. Hence an interval of length $\ell = 1/p^{-1}(w)$ contains $\rho \, \ell = 1$ vehicle.
\end{remark}

\begin{remark}
We have that a vacuum state belongs to $\calW(\ov^\pm)$ if and only if $0\in I_p$ and $p^{-1}(\uw-\ov^\pm) = 0$.
In this case, the unique vacuum state in $\calW(\ov^\pm)$ is $\bW =  (0,\uw)$.
\end{remark}

For completeness, we recall the following definition.
\begin{definition}
\label{d:weak_solW}
Fix $(\bW_L,\bW_R) \in \calW(\ov^-) \times \calW(\ov^+)$.
A function $\bW=(\rho,w)$ in $\L\infty((0,\infty);\BV(\R))\cap \C0(\R_+ ;\Lloc{1}(\R))$ is weak solution of the Riemann problem \eqref{e:Riemann} if for any test function $\varphi \in \Cc{\infty} \left( [0,\infty)\times \R; \R\right)$ we have
\begin{align*}
&\int_{\R_+}\int_\R \left( \partial_t\varphi + c \, V(w-p(\rho)) \, \partial_x\varphi\right) \, \rho \, \d x \, \d t + \int_\R \rho_o(x) \, \varphi(0,x) \, \d x = 0,
\\
&\int_{\R_+}\int_\R \left( \partial_t\varphi + c \, V(w-p(\rho)) \, \partial_x\varphi\right) \, \rho \, w \, \d x \, \d t + \int_\R \rho_o(x) \, w_o(x) \, \varphi(0,x) \, \d x = 0.
\end{align*}
\end{definition}

We follow the argument in \cite{RosiniAnnales} to give the definition of our Riemann solver.
We first need to introduce some notation.
Let $D_*^\pm \doteq \calW(\ov^\pm) \times [\uw,\ow]$ and define $\bW_* \colon D_*^\pm \to \calW(\ov^\pm)$ as follows: for any $(\bW_R,w_L) \in D_*^\pm$ we let
\begin{equation}
\label{e:w*}
\begin{aligned}
\rho_*(\bW_R,w_L) &\doteq \inf\left\{ \rho \in [p^{-1}(w_L-\ov^+),p^{-1}(w_L)] : v(\rho,w_L) \leq v(\bW_R) \right\},
\\
w_*(\bW_R,w_L) &\doteq w_L.
\end{aligned}
\end{equation}
Observe that $\bW_*$ is well defined.
Indeed, by definition we readily have $w_*(\bW_R,w_L) = w_L \in [\uw,\ow]$ and $v\left(\bW_*(\bW_R,w_L)\right) = v(\bW_R) \in [0,\ov^\pm]$.
Then observe that $p\left(\rho_*(\bW_R,w_L)\right) = w_L - v(\bW_R) \in [\uw-\ov^\pm,\ow]$ and therefore $\rho_*(\bW_R,w_L) \in [p^{-1}(\uw-\ov^\pm),p^{-1}(\ow)]$.

For any $w \in [\uw,\ow]$, we introduce the function
\begin{equation}
\label{e:F}
\begin{array}{@{}r@{}r@{\,}c@{\,}l@{}}
F_{w} \colon & D_{F_{w}} \doteq \left[ p^{-1}(w-\ov) , p^{-1}(w) \right]& \to  &\left[ 0 , \infty \right),
\\
&\rho&\mapsto& V\left( v(\rho,w) \right) \, \rho,
\end{array}
\end{equation}
with $\ov$ given in \eqref{e:ov}.
Observe that $v(\rho,w) \in [0,\ov]$ for every $w \in [\uw,\ow]$ and $\rho \in D_{F_{w}}$.
Notice that the flux of $\rho$ in \eqref{e:system} is $c\,F_{w}(\rho)$.
We collect the main properties of $F_{w}$ in the following lemma.

\begin{lemma}
For any $w \in [\uw,\ow]$, the function $F_{w}(\rho) \doteq \rho \, V\left( v(\rho,w) \right)$ given in \eqref{e:F} is strictly concave down and $F_{w}\left(p^{-1}( w )\right) = 0$.
\end{lemma}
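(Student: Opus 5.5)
The plan is to verify the two claims separately. The vanishing at the right endpoint is immediate, and strict concavity will follow from a sign computation of $F_w''$ that uses \eqref{e:pV1}.

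\emph{Vanishing at the endpoint.} At $\rho = p^{-1}(w)$ we have $v(\rho,w) = w - p(p^{-1}(w)) = 0$ by \eqref{e:v}. Then \eqref{e:V}\textsubscript{$1$} gives $V(0)=0$, so $F_w(p^{-1}(w)) = p^{-1}(w)\cdot 0 = 0$.

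\emph{Strict concavity.} Since $v(\rho,w)=w-p(\rho)$, we have $\partial_\rho v = -p'(\rho)$, and differentiating twice gives
\begin{align*}
F_w'(\rho) &= V(v) - \rho\,p'(\rho)\,V'(v),\\
F_w''(\rho) &= -2p'(\rho)\,V'(v) - \rho\,p''(\rho)\,V'(v) + \rho\,p'(\rho)^2\,V''(v),
\end{align*}
with $v = v(\rho,w)$. Hence
\[
\rho\,F_w''(\rho) = \rho\left(\rho\,p'(\rho)^2\,V''(v) - \left(2p'(\rho)+\rho\,p''(\rho)\right)V'(v)\right),
\]
which is exactly $\alpha(\bU,1)$ computed in Section~\ref{s:2}. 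Next, multiply \eqref{e:pV1} by the positive quantity $\rho\,p'(\rho)^2\,V'(v)$; positivity holds for $\rho>0$ by \eqref{e:p} and \eqref{e:V}\textsubscript{$2$}. This yields $\rho\,p'(\rho)^2\,V''(v) < (2p'(\rho)+\rho\,p''(\rho))V'(v)$, that is, $F_w''(\rho)<0$ for every $\rho\in D_{F_w}\setminus\{0\}$. Here we use that $v(\rho,w)\in[0,\ov]\subseteq D_V$, as already observed after \eqref{e:F}, so that \eqref{e:pV1} applies at the pair $(\rho,v(\rho,w))$.

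\emph{Main obstacle.} The only delicate point is the possible vacuum endpoint $\rho=0$, which can belong to $D_{F_w}$ when $p^{-1}(w-\ov)=0$. Hypothesis \eqref{e:pV1} is only assumed for $\rho\neq0$, and $p$ may fail to be $\C2$ there. I would not try to sign $F_w''(0)$. Instead, I would use that $F_w$ is continuous on the closed interval $D_{F_w}$ and has $F_w''<0$ on its interior. Then $F_w'$ is strictly decreasing on the interior, so $F_w$ is strictly concave on the interior, and strict concavity extends to the closed interval by continuity: the strict chord inequality between an endpoint and an interior point follows by taking limits of chords and applying strict concavity on a subinterval. This gives strict concavity on all of $D_{F_w}$.
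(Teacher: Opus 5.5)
Your proposal is correct and follows the paper's route. You compute $F_w''$, sign it using \eqref{e:pV1} multiplied by $\rho\,p'(\rho)^2\,V'(v)>0$, and get $F_w(p^{-1}(w))=0$ from \eqref{e:v} and $V(0)=0$. Your extra treatment of a possible vacuum endpoint $\rho=0$, where \eqref{e:p} and \eqref{e:pV1} are not assumed, is a sound refinement that the paper's proof passes over. The mean value theorem closes it cleanly: continuity on $D_{F_w}$ together with $F_w'$ strictly decreasing on the interior gives the strict chord inequality on the whole closed interval.
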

\begin{proof}
By \eqref{e:pV1} we have
\begin{align}
\label{e:Fwprimo}
F_{w}'(\rho) &= V\left( w - p(\rho) \right) - \rho \, p'(\rho) \, V'\left( w - p(\rho) \right),
\\
\label{e:Fwsecondo}
F_{w}''(\rho) &= -V'\left( w - p(\rho) \right) \, \left( 2p'(\rho)+\rho\,p''(\rho) \right) + \rho \, p'(\rho)^2 \, V''\left( w - p(\rho) \right) < 0,
\end{align}
hence $F_{w}$ is strictly concave down.
The latter claim follows from \eqref{e:v} and \eqref{e:V}\textsubscript{$1$}.
\end{proof}

For any $w \in [\uw,\ow]$, we also introduce the function 
\begin{equation}
\label{e:lambdaw}
\begin{array}{@{}r@{}r@{\,}c@{\,}l@{}}
\lambda_w \colon & D_{F_{w}}& \to  &\R,
\\
&\rho&\mapsto&V\left( w - p(\rho) \right) - \rho \, p'(\rho) \, V'\left( w - p(\rho) \right).
\end{array}
\end{equation}
By definition we have $c \, \lambda_w(\rho) = \lambda_1(\rho,w\,\rho,c)$, where $\lambda_1$ is given in \eqref{e:lam1}\textsubscript{$1$}.
Observe that $\lambda_w(\rho) = {F_{w}}'(\rho)$ by \eqref{e:Fwprimo}, hence it is strictly decreasing by \eqref{e:Fwsecondo}, and therefore it is invertible.
Consider its inverse function $\mathcal{H}_w \colon \lambda_w\left( D_{F_{w}} \right) \to D_{F_{w}}$.

Finally, introduce $s \colon  \{(\bW_-,\bW_+) \in \calW(\ov)^2 : \rho_- \neq \rho_+\} \to \R$ defined by
\begin{equation}
\label{e:s}
s(\bW_-,\bW_+) \doteq \frac{F_{w_+}(\rho_+)-F_{w_-}(\rho_-)}{\rho_+-\rho_-}.
\end{equation}
Observe that if a weak solution of \eqref{e:Riemann} has a discontinuity $(\bW_-,\bW_+) \in \calW(\ov^\pm)^2$ at $\pm x>0$, then it has speed of propagation $c^\pm \, s(\bW_-,\bW_+)$.

Consider the one-parameter family of \emph{fundamental diagrams} associated to $c^\pm > 0$
\[\{(\rho,f) \in D_p \times [0,\infty) : (\rho,w) \in \calW(\ov^\pm), \ f = c^\pm \, F_{w}(\rho) \} \qquad\hbox{for } w\in [\uw,\ow].\]
Observe that the fundamental diagrams associated with $c^- = c^+$ do not intersect. On the contrary, when $c^- \neq c^+$ this property is lost if the fundamental diagrams corresponding to $c^-$ and $c^+$ are represented in the same plot.
Therefore, for simplicity of exposition, we construct our Riemann solver first for the case $c^- = c^+$, and then for the case $c^-\neq c^+$.

\subsubsection*{Basic case}

Assume that $c^\pm = \bar{c}$.
Take $\ov^\pm = \ov$ and $\calW \doteq \calW(\ov)$.
The Riemann solver associated to the Riemann problem \eqref{e:Riemann} is then given as follows.

\begin{definition}\label{d:Peryphery}
Fix $\bar{c}>0$. 
The Riemann solver $\RS_{\bar{c}} \colon \calW^2 \to \BV(\R;\calW)$ for the Riemann problem \eqref{e:Riemann} with $c^{\pm}=\bar{c}$ is defined as follows:
\begin{enumerate}[label={{\rm\bf(L.\arabic*)}}, leftmargin=*]

\item\label{L-C}
If $\bW_L,\bW_R \in \calW$ with $w_L \neq w_R$ and $v(\bW_L) = v(\bW_R) = v$, then
\[
\RS_{\bar{c}}[\bW_L,\bW_R](\nu) \doteq
\begin{cases}
\bW_L&\hbox{if }\nu/\bar{c} < V(v),
\\
\bW_R&\hbox{if }\nu/\bar{c} \geq V(v)).
\end{cases}
\]

\item\label{L-S}
If $\bW_L,\bW_R \in \calW$ with $w_L = w_R$ and $v(\bW_L) > v(\bW_R)$, then
\[
\RS_{\bar{c}}[\bW_L,\bW_R](\nu) \doteq
\begin{cases}
\bW_L&\hbox{if } \nu/\bar{c} < s(\bW_L,\bW_R),
\\
\bW_R&\hbox{if } \nu/\bar{c} \geq s(\bW_L,\bW_R),
\end{cases}
\]
where $s$ is given in \eqref{e:s}.

\item\label{L-R}
If $\bW_L,\bW_R \in \calW$ with $w_L = w_R = w$ and $v(\bW_L) < v(\bW_R)$, then
\[
\RS_{\bar{c}}[\bW_L,\bW_R](\nu) \doteq
\begin{cases}
\bW_L&\hbox{if } \nu/\bar{c} < \lambda_{w}(\rho_L),
\\
\left(\mathcal{H}_w\left(\nu/\bar{c}\right) , w\right)&\hbox{if } \lambda_{w}(\rho_L) \leq \nu/\bar{c} < \lambda_{w}(\rho_R),
\\
\bW_R&\hbox{if }\nu/\bar{c} \geq \lambda_{w}(\rho_R),
\end{cases}
\]
where $\lambda_w$ is given in \eqref{e:lambdaw}.

\item\label{L-SC}
If $\bW_L,\bW_R \in \calW$ with $w_L\neq w_R$ and $v(\bW_L) > v(\bW_R)$, then
\[
\RS_{\bar{c}}[\bW_L,\bW_R](\nu) \doteq
\begin{cases}
\bW_L&\hbox{if } \nu/\bar{c} < s(\bW_L,\bW_M),
\\
\bW_M&\hbox{if } s(\bW_L,\bW_M)\leq \nu/\bar{c} < V(v(\bW_R)),
\\
\bW_R&\hbox{if } \nu/\bar{c} \geq V(v(\bW_R)),
\end{cases}
\]
where $\bW_M  \doteq \bW_*(\bW_R,w_L)$.
\item\label{L-RC}
If $\bW_L,\bW_R \in \calW$ with $w_L\neq w_R$ and $v(\bW_L) < v(\bW_R)$, then
\[
\RS_{\bar{c}}[\bW_L,\bW_R](\nu) \doteq
\begin{cases}
\bW_L&\hbox{if }\nu/\bar{c} < \lambda_{w_L}(\rho_L),
\\
\left(\mathcal{H}_{w_L}\left(\nu/\bar{c}\right),w_L\right)&\hbox{if } \lambda_{w_L}(\rho_L) \leq \nu/\bar{c} < \lambda_{w_L}(\rho_M),
\\
\bW_M&\hbox{if } \lambda_{w_L}(\rho_M) \leq \nu/\bar{c} < V(v(\bW_R)),
\\
\bW_R&\hbox{if } \nu/\bar{c} \geq V(v(\bW_R)),
\end{cases}
\]
where $\lambda_w$ is given in \eqref{e:lambdaw} and $\bW_M  \doteq \bW_*(\bW_R,w_L)$.
\item
If $\bW_L=\bW_R$, then $\RS_{\bar{c}}[\bW_L,\bW_R] \,\dot{\equiv}\, \bW_{L,R}$.
\end{enumerate}
\end{definition}

\begin{proposition}
The Riemann solver given in Definition~\ref{d:Peryphery} is well defined.
\end{proposition}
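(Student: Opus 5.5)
To prove that the Riemann solver of Definition~\ref{d:Peryphery} is well defined, I would check three things. First, the cases (L.1)--(L.6) cover every pair $(\bW_L,\bW_R)\in\calW^2$ exactly once. Second, every quantity used in each case is defined at the relevant arguments. Third, the resulting map $\nu\mapsto\RS_{\bar c}[\bW_L,\bW_R](\nu)$ takes values in $\calW$, is in $\BV$, and has ordered breakpoints, so that no interval in the definition is inverted.

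The case analysis is a trichotomy on $v(\bW_L)$ versus $v(\bW_R)$ combined with a dichotomy $w_L=w_R$ or $w_L\ne w_R$. If $w_L=w_R$ and $v(\bW_L)=v(\bW_R)$, then $p(\rho_L)=p(\rho_R)$, hence $\bW_L=\bW_R$ by \eqref{e:p}, which is case (L.6). All remaining combinations are exactly (L.1)--(L.5), and they are mutually exclusive. Next, $\bW_M=\bW_*(\bW_R,w_L)$ is defined because $(\bW_R,w_L)\in D_*$. Moreover $w_M=w_L$ and $v(\bW_M)=v(\bW_R)$, as observed after \eqref{e:w*}, so $\bW_M\in\calW$.

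For each case I would verify the ordering of the speeds.
\begin{itemize}
\item In (L.3) and (L.5): $v(\bW_L)<v(\bW_R)$ with the same $w$ means $\rho_L>\rho_R$ (respectively $\rho_L>\rho_M$). Since $\lambda_w$ is strictly decreasing, $\lambda_w(\rho_L)<\lambda_w(\rho_R)$. Then $\mathcal H_w$ is defined on $[\lambda_w(\rho_L),\lambda_w(\rho_R))\subseteq\lambda_w(D_{F_w})$, because $D_{F_w}$ is an interval containing $\rho_L,\rho_R$ and $\lambda_w$ is continuous. The values $(\mathcal H_w(\nu/\bar c),w)$ have densities between $\rho_R$ and $\rho_L$, so their $v$ lies between $v(\bW_L)$ and $v(\bW_R)$ and thus in $[0,\ov]$.
\item In (L.2) and (L.4): $\rho_L\ne\rho_R$ (respectively $\rho_L\ne\rho_M$), so $s$ is defined.
\item In (L.4): I need $s(\bW_L,\bW_M)\le V(v(\bW_R))$.
\item In (L.5): I need $\lambda_{w_L}(\rho_M)\le V(v(\bW_M))=V(v(\bW_R))$.
\end{itemize}

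The last two inequalities are the main point. Both follow from the strict concavity of $F_{w_L}$ and $F_{w_L}(0)=0$, when $0\in D_{F_{w_L}}$ or by extending $\rho\mapsto\rho V(w_L-p(\rho))$ concavely. The slope of the chord from the origin to $(\rho_M,F_{w_L}(\rho_M))$ is $V(v(\bW_M))$. For (L.4), $\rho_L>\rho_M$, and concavity gives that the chord slope $s$ over $[\rho_M,\rho_L]$ is at most the chord slope from $0$ to $\rho_M$. For (L.5), concavity gives $F'_{w_L}(\rho_M)\le F_{w_L}(\rho_M)/\rho_M$. Alternatively, and more directly, $\lambda_{w}(\rho)=V(v)-\rho p'V'\le V(v)$ by \eqref{e:p} and \eqref{e:V}. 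For (L.4) I would rather argue as follows: $s(\bW_L,\bW_M)$ is a weighted mean of $F'_{w_L}$ on $[\rho_M,\rho_L]$, hence at most $\lambda_{w_L}(\rho_M)\le V(v(\bW_M))$. This avoids any issue about $0\in D_{F_w}$. The vacuum case $\rho_M=0$ needs only a remark, since there the middle state degenerates consistently.

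Finally, each output is piecewise constant or monotone in $\nu$ with finitely many pieces, so it lies in $\BV(\R;\calW)$. I expect the only delicate step to be the ordering in (L.4) and (L.5), handled by the mean-value and concavity argument above.
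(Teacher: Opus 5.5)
Your overall plan (exhaustiveness of the cases, $\bW_M\in\calW$, rarefaction states in $\calW$, and ordering of the breakpoints) is sound. It is more complete than the paper's proof, which only notes that $\bW_M=\bW_*(\bW_R,w_L)\in\calW$ and appeals to the monotonicity of $\mathcal H_w$. Your treatment of (L.5) via $\lambda_w(\rho)=V(v)-\rho\,p'(\rho)\,V'(v)\le V(v)$ is also correct.

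However, the step you single out as the main point fails in case (L.4). There $w_M=w_L$ and $v(\bW_L)>v(\bW_R)=v(\bW_M)$ give $p(\rho_L)<p(\rho_M)$, i.e.\ $0\le\rho_L<\rho_M$, as the paper states after Definition~\ref{d:Peryphery}; it is not $\rho_L>\rho_M$. Consequently $s(\bW_L,\bW_M)$ is the mean of $F'_{w_L}$ over $[\rho_L,\rho_M]$, where $F'_{w_L}\ge F'_{w_L}(\rho_M)$ since $F'_{w_L}$ is decreasing. Your preferred argument therefore yields $s(\bW_L,\bW_M)\ge\lambda_{w_L}(\rho_M)$, which is just the Lax inequality for the $1$-shock. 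That is the wrong direction and gives no information about $s(\bW_L,\bW_M)\le V(v(\bW_R))$.

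The inequality itself is true, and the fix is elementary. Write $V_L=V(v(\bW_L))$ and $V_M=V(v(\bW_M))=V(v(\bW_R))$. Then
\[
s(\bW_L,\bW_M)-V_M=\frac{\rho_M V_M-\rho_L V_L}{\rho_M-\rho_L}-V_M=\frac{\rho_L\,(V_M-V_L)}{\rho_M-\rho_L}\le0,
\]
because $\rho_M>\rho_L\ge0$ and $V_M<V_L$ by \eqref{e:V}. Equality holds exactly when $\rho_L=0$, in which case the $\bW_M$-sector is empty.

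Your first, discarded argument (the chord from the origin, via the three-chord inequality on $0\le\rho_L<\rho_M$) would also work in the correct configuration. You would need to justify extending $F_{w_L}$ concavely to $[0,\rho_M]$ with value $0$ at $0$. This is possible, e.g.\ linearly on $[0,p^{-1}(w_L-\ov)]$, because $\lambda_{w_L}\le V$ at that endpoint.

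Finally, in (L.4) one automatically has $\rho_M>0$, so your vacuum remark concerns only (L.5). There $\lambda_{w_L}(0)=V(v(\bW_M))$, so the $\bW_M$-sector is empty.
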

\begin{proof}
By the monotonicity properties of $\mathcal{H}_w$, it is sufficient to observe that the intermediate state $\bW_M$ belongs to $\calW$, both in the case \ref{L-SC} and \ref{L-RC}.
In both cases, we let $\bW_M = \bW_*(\bW_R,w_L)$ and we already proved that $\bW_*$ takes values in $\calW$.
This concludes the proof.
\end{proof}

\noindent
Some comments on the above definition are in order.
In case \ref{L-C} we have that $\RS_{\bar{c}}[\bW_L,\bW_R]$ is the $2$-contact discontinuity $\mathcal{C}_2(\bW_L,\bW_R)$.
In case \ref{L-S} we have $0 \leq \rho_L < \rho_R \leq p^{-1}(w_{L,R})$ and $\RS_{\bar{c}}[\bW_L,\bW_R]$ is the $1$-shock $\mathcal{S}_1(\bW_L,\bW_R)$.
In case \ref{L-R} we have $0 \leq \rho_R < \rho_L \leq p^{-1}(w_L)$ and $\RS_{\bar{c}}[\bW_L,\bW_R]$ is the $1$-rarefaction $\mathcal{R}_1(\bW_L,\bW_M)$.
In case \ref{L-SC} we have $0 \leq \rho_L < \rho_M \leq p^{-1}(w_L)$ and $\RS_{\bar{c}}[\bW_L,\bW_R]$ is the juxtaposition of $\mathcal{S}_1(\bW_L,\bW_M)$ and $\mathcal{C}_2(\bW_M,\bW_R)$.
In case \ref{L-RC} we have $0 \leq \rho_M < \rho_L \leq p^{-1}(w_L)$, with $\rho_M=0$ if and only if $v(\bW_R) \geq w_L$, and $\RS_{\bar{c}}[\bW_L,\bW_R]$ is the juxtaposition of $\mathcal{R}_1(\bW_L,\bW_M)$ and $\mathcal{C}_2(\bW_M,\bW_R)$.

\begin{proposition}
\label{p:SnarkyPuppy}
For any $\bW_L,\bW_R\in\calW$ the map $(t,x) \mapsto \RS_{\bar{c}}[\bW_L,\bW_R](x/t)$ is a weak solution of the Riemann problem \eqref{e:Riemann} in the sense of Definition~\ref{d:weak_solW}.
\end{proposition}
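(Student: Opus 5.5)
The plan is the standard one for self-similar solutions. A piecewise smooth self-similar function $\bW(t,x)=\RS_{\bar c}[\bW_L,\bW_R](x/t)$ is a weak solution in the sense of Definition~\ref{d:weak_solW} if three things hold. It must solve both equations of \eqref{e:system} classically where it is smooth. Across each discontinuity curve $x=\sigma t$ it must satisfy the Rankine--Hugoniot conditions for both conserved quantities $\rho$ and $\rho w$. Finally, it must take the initial datum \eqref{e:iniRiemann}. The last point is immediate: on each half-line the map is constant for $\nu\to\pm\infty$, so $\bW(t,\cdot)\to\bW_o$ in $\Lloc1$ as $t\to0^+$. It is also clearly in $\L\infty((0,\infty);\BV(\R))\cap\C0(\R_+;\Lloc1(\R))$, because it takes finitely many constant values plus at most one monotone rarefaction fan. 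I would derive the integral identities by splitting $\R_+\times\R$ into the wedges determined by the wave speeds and applying the divergence theorem in each wedge. The boundary terms on the lines $x=\sigma t$ then cancel exactly when the jump condition $\sigma[\![\rho]\!]=\bar c[\![F_w(\rho)]\!]$ and the analogous condition for $\rho w$ hold. The $t=0$ boundary produces the initial-datum term. The point $(0,0)$ causes no trouble because the test functions are bounded.

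With this reduction I would check the elementary waves one by one and handle the composite cases \ref{L-SC} and \ref{L-RC} as juxtapositions.

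For a $2$-contact $\mathcal{C}_2$, as in \ref{L-C} and the second waves of \ref{L-SC} and \ref{L-RC}, both sides share $v$, so the velocity $\bar c V(v)$ is the same on each side. The fluxes of $\rho$ and $\rho w$ are then $\bar cV(v)\rho$ and $\bar cV(v)\rho w$, and both jump conditions with speed $\sigma=\bar c V(v)$ hold trivially. For a $1$-shock, as in \ref{L-S} and the first wave of \ref{L-SC}, both sides share $w$. The $\rho$-condition with $\sigma=\bar c\,s(\bW_-,\bW_+)$ is the definition \eqref{e:s}. The $\rho w$-condition is the $\rho$-condition multiplied by the common constant $w$.

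For the $1$-rarefaction in \ref{L-R} and \ref{L-RC}, $w$ is constant and $\rho(t,x)=\mathcal{H}_w(x/(\bar c t))$. The first equation reduces to the scalar law $\partial_t\rho+\bar c\,\partial_x F_w(\rho)=0$. The second equation is $w$ times the first, so it holds as well. For the scalar law, $\bar c\,F_w'(\rho)=\bar c\lambda_w(\rho)=x/t$ along the fan, so $\rho$ is the classical centered rarefaction. It is Lipschitz for $t>0$ and solves the equation pointwise a.e. At the fan edges $\rho$ is continuous, since $\mathcal{H}_w(\lambda_w(\rho_L))=\rho_L$ and similarly at the other edge, so no jump conditions are required there.

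I expect the main obstacle to be the composite cases, in particular checking that the wedges are correctly ordered so that the splitting argument applies. In \ref{L-SC} one needs $s(\bW_L,\bW_M)\le V(v(\bW_R))$. In \ref{L-RC} one needs $\lambda_{w_L}(\rho_M)\le V(v(\bW_R))$. For the latter I would use $v(\bW_M)=v(\bW_R)$ together with $\lambda_{w}(\rho)=V(v)-\rho p'(\rho)V'(v)\le V(v)$. For the former, strict concavity of $F_{w_L}$ and $F_{w_L}(\rho_M)=\rho_M V(v(\bW_R))$ give that the chord slope $s(\bW_L,\bW_M)$ is at most the slope of the chord from $0$ to $\rho_M$. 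Since $F_{w_L}(0)=0$, that slope equals $V(v(\bW_R))$. A separate check covers the vacuum sub-case of \ref{L-RC} with $\rho_M=0$: there the middle state $\bW_M$ carries zero mass, so the contact jump is trivially satisfied whatever its speed. The remaining cases are then pure bookkeeping, and the trivial case $\bW_L=\bW_R$ is immediate.
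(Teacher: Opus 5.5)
Your proposal is correct and takes the same route as the paper, whose proof only checks the Rankine--Hugoniot conditions at every discontinuity; you additionally verify the rarefaction fan and the ordering of the wave speeds in \ref{L-SC} and \ref{L-RC}, which the paper leaves implicit. Two small slips are easily repaired. First, $\mathcal{H}_w$ need not be Lipschitz at a vacuum edge of a fan: for $p(\rho)=\rho^3$, $V(v)=v$ one has $F_w''(0)=0$. However, $\rho$ is still continuous and $\C1$ inside the fan, so your wedge-by-wedge divergence argument still applies. Second, $F_{w_L}(0)$ is undefined when $0\notin D_{F_{w_L}}$ (e.g.\ $p=\ln$). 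The inequality $s(\bW_L,\bW_M)\le V(v(\bW_R))$ follows without concavity from $F_{w_L}(\rho_M)=\rho_M\,V(v(\bW_R))$ and $F_{w_L}(\rho_L)=\rho_L\,V(v(\bW_L))\ge\rho_L\,V(v(\bW_R))$.
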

\begin{proof}
It is sufficient to observe that any discontinuity satisfies the Rankine-Hugoniot condition.
This readily follows from the expression of the propagation speed \eqref{e:s}, and by observing that $V\left( v(\rho,w) \right) \, \rho$ is continuous at $\nu=0$.
\end{proof}

\subsubsection*{General case}

Assume that $c^- \neq c^+$.
In this case, the choice of $\ov^\pm$ to get a well defined Riemann solver is a delicate issue and will be discussed later, see Proposition~\ref{p:OrbitCulture}.
The Riemann solver associated to the Riemann problem \eqref{e:Riemann} is given as follows.
We first need to introduce some notation.
Denote by $\RS_\pm$ and $\RS$ the Riemann solvers given in Definition~\ref{d:Peryphery}, corresponding to $\oc=c^{\pm}$ and $\oc=1$, respectively.
Observe that for a fixed $w \in [\uw,\ow]$, the quantities
\[\mathsf{F}_\pm(w) \doteq \max\{F_{w}(\rho) : \rho \in D_{F_{w}}, v(\rho,w) \in [0,\ov^\pm] \}\]
are nonnegative and bounded.
Moreover $\mathsf{r}_\pm(w) \in D_{F_{w}}$ such that $F_{w}\left(\mathsf{r}_\pm(w)\right) = \mathsf{F}_\pm(w)$ are uniquely determined.
Define $\mathsf{Q}_- \colon \calW(\ov^-) \to [0,\infty)$ and $\mathsf{Q}_+ \colon \calW(\ov^+) \times [\uw,\ow] \to [0,\infty)$ as follows:
\begin{align}
\label{e:Qm}
\mathsf{Q}_-(\bW_L) &\doteq 
\begin{cases}
\mathsf{F}_-(w_L)&\hbox{if } \rho_L \geq \mathsf{r}_-(w_L),
\\
F_{w_L}(\rho_L)&\hbox{if } \rho_L < \mathsf{r}_-(w_L),
\end{cases}
\\
\label{e:Qp}
\mathsf{Q}_+(\bW_R,w_L) &\doteq 
\begin{cases}
F_{w_L}\left(\rho_*(\bW_R,w_L)\right)&\hbox{if } \rho_*(\bW_R,w_L) > \mathsf{r}_+(w_L),
\\
\mathsf{F}_+(w_L)&\hbox{if } \rho_*(\bW_R,w_L) \leq \mathsf{r}_+(w_L).
\end{cases}
\end{align}
Then define $\mathsf{Q} \colon \calW(\ov^-)\times\calW(\ov^+) \to [0,\infty)$ given by
\begin{equation}
\label{e:Q}
\mathsf{Q}(\bW_L,\bW_R) \doteq \min\left\{
c^- \, \mathsf{Q}_-(\bW_L),
c^+ \, \mathsf{Q}_+(\bW_R,w_L)
\right\}.
\end{equation}
\begin{definition}
\label{d:hat}
Fix $(\bW_L,\bW_R) \in \calW(\ov^-)\times\calW(\ov^+)$ and define $\hat{\bW}(\bW_L,\bW_R) \in \calW(\ov^-)$ as follows:
\begin{enumerate}[leftmargin=*, label={{\bf(H.\roman*)}}]
\item 
\label{i:trivhat}
If $\mathsf{Q}(\bW_L,\bW_R) = c^- \, F_{w_L}(\rho_L)$, then we let $\hat{\bW}(\bW_L,\bW_R) = \bW_L$.
\item 
\label{i:hat}
If $\mathsf{Q}(\bW_L,\bW_R) \neq c^- \, F_{w_L}(\rho_L)$ and $\bW \in \calW(\ov^-)$ such that
\begin{align}
\label{e:hat}
&\rho \in [\mathsf{r}_-(w_L),p^{-1}(w_L)],&
&w=w_L,&
&c^- \, F_{w_L}(\rho) = \mathsf{Q}(\bW_L,\bW_R),
\end{align}
exists, then we denote it by $\hat{\bW}(\bW_L,\bW_R)$.
\end{enumerate}
\end{definition}

\begin{definition}
\label{d:check}
Fix $(\bW_L,\bW_R) \in \calW(\ov^-)\times\calW(\ov^+)$ and define $\check{\bW}(\bW_L,\bW_R) \in \calW(\ov^+)$ as follows:
\begin{enumerate}[leftmargin=*, label={{\bf(C.\roman*)}}]
\item 
\label{i:trivcheck}
If $\mathsf{Q}(\bW_L,\bW_R) = c^+ \, F_{w_L}\left(\rho_*(\bW_R,w_L)\right)$, then we let $\check{\bW}(\bW_L,\bW_R) = \bW_*(\bW_R,w_L)$.
\item 
\label{i:check}
If $\mathsf{Q}(\bW_L,\bW_R) \neq c^+ \, F_{w_L}\left(\rho_*(\bW_R,w_L)\right)$ and $\bW \in \calW(\ov^+)$ such that
\begin{align}
\label{e:check}
&\rho \in [0,\mathsf{r}_+(w_L)],&
&w=w_L,&
&c^+ \, F_{w_L}(\rho) = \mathsf{Q}(\bW_L,\bW_R),
\end{align}
exists, then we denote it by $\check{\bW}(\bW_L,\bW_R)$.
\end{enumerate}
\end{definition}

Introduce the domain
\[D_{\RS} \doteq \left\{ (\bW_L,\bW_R) \in \calW(\ov^-)\times\calW(\ov^+) : \hat{\bW}, \check{\bW} \hbox{ are well defined at } (\bW_L,\bW_R) \right\},\]
and observe that for any $(\bW_L,\bW_R) \in D_{\RS}$ we have
\begin{equation*}
c^- \, F_{\hat{w}(\bW_L,\bW_R)}\left(\hat{\rho}(\bW_L,\bW_R)\right) = \mathsf{Q}(\bW_L,\bW_R)  = c^+ \, F_{\check{w}(\bW_L,\bW_R)}\left(\check{\rho}(\bW_L,\bW_R)\right).
\end{equation*}

We are now in the position to give the following definition.

\begin{definition}\label{d:Haken}
For any $(\bW_L,\bW_R) \in D_{\RS}$ we let
\begin{equation*}
\RS_{c^-,c^+}[\bW_L,\bW_R](\nu) \doteq
\begin{cases}
\RS_-[\bW_L,\hat{\bW}(\bW_L,\bW_R)](\nu)&\hbox{if }\nu < 0,
\\
\RS_+[\check{\bW}(\bW_L,\bW_R),\bW_R](\nu)&\hbox{if }\nu \geq 0.
\end{cases}
\end{equation*}
\end{definition}

In the following proposition we provide a sufficient condition on $c^\pm$ and $\ov^\pm$ ensuring that $\RS_{c^-,c^+}$ is well defined on the whole domain $\calW(\ov^-)\times\calW(\ov^+)$.

\begin{proposition}
\label{p:OrbitCulture}
If $c^- > c^+ > 0$ and $\ov^\pm = \ov \in D_V$ are such that
\begin{align}
\label{e:OrbitCulture}
(\rho,w) \in \calW(\ov) &\Longrightarrow  \lambda_w(\rho) \leq 0,
\end{align}
then $D_{\RS} = \calW(\ov^-)\times\calW(\ov^+)$.
\end{proposition}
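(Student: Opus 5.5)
The plan is to show that, under \eqref{e:OrbitCulture}, the fluxes $F_w$ are monotone on their whole domains. This makes all the maxima in \eqref{e:Qm}--\eqref{e:Q} explicit. The existence of $\hat{\bW}$ and $\check{\bW}$ then reduces to one application of the intermediate value theorem.

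\emph{Step 1: monotonicity of $F_w$.} Fix $w\in[\uw,\ow]$. Since $\ov^\pm=\ov$, we have $\calW(\ov^-)=\calW(\ov^+)=\calW(\ov)$, and the constraint $v(\rho,w)\in[0,\ov]$ is equivalent to $\rho\in D_{F_w}=[p^{-1}(w-\ov),p^{-1}(w)]$. Hypothesis \eqref{e:OrbitCulture} together with $\lambda_w=F_w'$ from \eqref{e:Fwprimo} gives $F_w'\le 0$ on $D_{F_w}$. Strict concavity, from the Lemma above, forces $F_w'$ to be strictly decreasing, so $F_w'$ vanishes at most at the left endpoint. Hence $F_w$ is strictly decreasing on $D_{F_w}$, from its maximum down to $F_w(p^{-1}(w))=0$. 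Consequently $\mathsf{r}_\pm(w)=p^{-1}(w-\ov)$ and $\mathsf{F}_\pm(w)=F_w(p^{-1}(w-\ov))$, and these are the same for both signs.

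\emph{Step 2: computing $\mathsf{Q}$.} Every admissible $\rho_L$ satisfies $\rho_L\ge \mathsf{r}_-(w_L)$, so $\mathsf{Q}_-(\bW_L)=\mathsf{F}_-(w_L)$. Likewise $\rho_*(\bW_R,w_L)\in D_{F_{w_L}}$, so $\rho_*\ge\mathsf{r}_+(w_L)$. In both branches of \eqref{e:Qp} we therefore get $\mathsf{Q}_+(\bW_R,w_L)=F_{w_L}(\rho_*(\bW_R,w_L))$, because the two branches coincide when $\rho_*=\mathsf{r}_+$. Now $c^->c^+$ and $F_{w_L}(\rho_*)\le\mathsf{F}_-(w_L)$, so
\[
\mathsf{Q}(\bW_L,\bW_R)=c^+F_{w_L}(\rho_*(\bW_R,w_L)).
\]

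\emph{Step 3: $\check{\bW}$ and $\hat{\bW}$ are well defined.} By Step 2, case \ref{i:trivcheck} always applies, so $\check{\bW}=\bW_*(\bW_R,w_L)$. This lies in $\calW(\ov^+)$, as already shown after \eqref{e:w*}.

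For $\hat{\bW}$: if $\mathsf{Q}=c^-F_{w_L}(\rho_L)$, case \ref{i:trivhat} gives $\hat{\bW}=\bW_L$. Otherwise we must solve $c^-F_{w_L}(\rho)=\mathsf{Q}$ with $\rho\in[\mathsf{r}_-(w_L),p^{-1}(w_L)]$. By Step 2,
\[
0\le \mathsf{Q}\le c^+\mathsf{F}_-(w_L)< c^-\mathsf{F}_-(w_L),
\]
or $\mathsf{Q}=0$ in the degenerate case. On this interval $c^-F_{w_L}$ is continuous and strictly decreasing from $c^-\mathsf{F}_-(w_L)$ to $0$. The intermediate value theorem then gives a $\rho$, which is unique by strict monotonicity. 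The state $(\rho,w_L)$ lies in $\calW(\ov^-)$ because $\rho\in D_{F_{w_L}}$ gives $v\in[0,\ov]$, and $w_L\in[\uw,\ow]$.

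The main obstacle is Step 1, namely making sure that \eqref{e:OrbitCulture} really controls $F_w'$ on the full interval $D_{F_w}$ for each $w$. This needs the identification $\{\rho:(\rho,w)\in\calW(\ov)\}=D_{F_w}$. That identification may fail near the endpoints $\uw,\ow$ because of the box constraint on $\rho$ in $\calW(\ov)$, and this must be checked from \eqref{e:DevinTownsend}. If it fails, one restricts to the admissible $\rho$, and the same argument goes through on that subinterval.
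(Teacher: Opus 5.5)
Your proposal is correct and follows essentially the paper's route. You use \eqref{e:OrbitCulture} to get $\mathsf{Q}_-(\bW_L)=\mathsf{F}_-(w_L)$, $\mathsf{Q}_+(\bW_R,w_L)=F_{w_L}(\rho_*)$ and $\mathsf{Q}=c^+F_{w_L}(\rho_*)$. Then \ref{i:trivcheck} gives $\check{\bW}=\bW_*$, and $\hat{\bW}$ comes from inverting the decreasing map $F_{w_L}$ on $[\mathsf{r}_-(w_L),p^{-1}(w_L)]$. Your Step~1 and your treatment of the boundary case $\rho_*=\mathsf{r}_+(w_L)$ only make explicit what the paper leaves implicit.

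The obstacle you flag does not arise. For $w\in[\uw,\ow]$, the monotonicity of $p^{-1}$ gives $p^{-1}(\uw-\ov)\le p^{-1}(w-\ov)$ and $p^{-1}(w)\le p^{-1}(\ow)$. Hence $\{\rho:(\rho,w)\in\calW(\ov)\}=D_{F_w}$ exactly, and you should state this one-line check instead of hedging with a restriction to a subinterval.
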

\begin{proof}
For simplicity of notation, we write $\bW_*$ in place of $\bW_*(\bW_R,w_L)$.
By the hypothesis \eqref{e:OrbitCulture}, we have that
\begin{gather*}
\mathsf{Q}_-(\bW_L) = \mathsf{F}_-(w_L),
\qquad
\mathsf{Q}_+(\bW_R,w_L) = F_{w_L}(\rho_*),
\\
\mathsf{Q}(\bW_L,\bW_R) = \min\left\{
c^- \, \mathsf{F}_-(w_L),
c^+ \, F_{w_L}(\rho_*)
\right\} = c^+ \, F_{w_L}(\rho_*),
\end{gather*}
see \eqref{e:Qm}, \eqref{e:Qp} and \eqref{e:Q}.
By \ref{i:trivcheck} we have that $\check{\bW}(\bW_L,\bW_R) = \bW_*(\bW_R,w_L)$ is well defined and obviously belongs to $\calW(\ov)$.
Case \ref{i:trivhat} is immediate.
We are then left to consider the case $\mathsf{Q}(\bW_L,\bW_R) \neq c^- \, F_{w_L}(\rho_L)$ and to prove that there exists $\bW \in \calW(\ov)$ satisfying \eqref{e:hat}.
Observe that $F_{w_L} \colon [\mathsf{r}_-(w_L),p^{-1}(w_L)] \to [0,\mathsf{F}_-(w_L)]$ is invertible by \eqref{e:OrbitCulture}.
Furthermore, the hypotheses $c^->c^+$ and $\ov^\pm=\ov$ imply that $\frac{c^+}{c^-} \, F_{w_L}(\rho_*) \in [0,\mathsf{F}_-(w_L)]$.
Therefore, it is sufficient to take $\rho = F_{w_L}^{-1} \left( \frac{c^+}{c^-} \, F_{w_L}(\rho_*)\right) \in [\mathsf{r}_-(w_L),p^{-1}(w_L)]$.
This concludes the proof.
\end{proof}

Some remarks on Definition~\ref{d:Haken} are in order.

\begin{remark}
For any $(\bW_L,\bW_R) \in D_{\RS}$, $\bW_\pm \doteq \RS_{c^-,c^+}[\bW_L,\bW_R](0^\pm)$ satisfy the following conditions
\begin{align}
\label{e:Landmarks}
\bW_- = \hat{\bW}(\bW_L,\bW_R),\
\bW_+ = \check{\bW}(\bW_L,\bW_R),\
c^\pm \, F_{w_\pm}(\rho_\pm) = \mathsf{Q}(\bW_L,\bW_R).
\end{align}
\end{remark}

\begin{remark}
We stress that $u(t,x) \doteq \RS_{c^-,c^+}[\bW_L,\bW_R](x/t)$ is a weak solution corresponding to the initial datum $(\bW_L,\bW_R)$.
This simply follows from Proposition~\ref{p:SnarkyPuppy} and \eqref{e:Landmarks}\textsubscript{$3$}.
\end{remark}

\begin{remark}
In the case $c^-\neq c^+$, we have that $u(t,x) \doteq \RS_{c^-,c^+}[\bW_L,\bW_R](x/t)$ is a stationary weak solution if and only if one of the following mutually exclusive cases occurs: either
\begin{equation*}
v(\bW_L) = v(\bW_R) = 0,
\end{equation*}
or
\begin{align*}
&w_L = w_R,&
&\rho_L \leq \mathsf{r}_-(w_L),&
&c^- \, F_{w_L}(\rho_L) = c^+ \, F_{w_R}(\rho_R),
\end{align*}
or
\begin{align*}
&w_L = w_R,&
&\rho_L > \mathsf{r}_-(w_L),&
&\rho_R > \mathsf{r}_+(w_R),&
&c^- \, F_{w_L}(\rho_L) = c^+ \, F_{w_R}(\rho_R).
\end{align*}

\end{remark}

\begin{remark}
Definition~\ref{d:Haken} resembles \cite[Definition~2.1]{Colombo-Goatin_constraint} for a scalar conservation law with a point constraint on the flow at $x=0$, which in our case becomes $c^\pm \, F_{w_\pm}(\rho_\pm) \leq \mathsf{Q}(\bW_L,\bW_R)$. 
The key difference is that we do not check whether the classical solution satisfies the constraint. Instead, we emphasize selecting, among the self-similar weak solutions that meet the constraint, the one that optimizes the flow.
\end{remark}

\begin{remark}
For any $(\bW_L,\bW_R)$ such that $w_L\neq w_R$, the rightmost wave of $\RS_{c^-,c^+}[\bW_L,\bW_R]$ is a $2$-contact discontinuity propagating with speed $c^+ \, V(v(\bW_R))$.
We stress that $2$-contact discontinuities always separate the two families of vehicles, characterized by the Lagrangian markers $w_L$ and $w_R$.
\end{remark}

\begin{remark}
$\RS_{c^-,c^+}[\bW_L,\bW_R]$ expressed in the Riemann invariant coordinates \eqref{e:vw} may have total variation greater than that of the initial datum, that is $|v(\bW_L)-v(\bW_R)|+|w_L-w_R|$; moreover the maximum principle holds for the $w$-coordinate, but may fail for the $v$-coordinate.
Indeed, if $\bW_L = \bW_R$, $v(\bW_L) = v(\bW_R) \neq 0$ and $c^- \neq c^+$, then $\RS_{c^-,c^+}[\bW_L, \bW_L]$ is not constant, see Section~\ref{s:4} for explicit examples.
\end{remark}

\begin{remark}
We have a stationary $2$-contact discontinuity at $\nu=0$ if and only if $v(\bW_R)=0$.
If this condition is not satisfied, the presence of a stationary discontinuity at $\nu=0$ is instead due to the discontinuity of $c$.
In this case, we refer to such a discontinuity as a $c$-wave.
\end{remark}

We conclude this section by studying the main properties of the Riemann solver $\RS_{c^-,c^+}$. Specifically, we consider the coherence and the $\Lloc{1}$-continuity with respect to the initial datum. 
We recall that, as in \cite[Definition~2.2]{CorliFigielFutaRosini}, the Riemann solver is coherent at $(\bW_L, \bW_R)$ if
\begin{align}
\label{e:PorcupineTree}
&\bW_\pm = \RS_{c^-,c^+}[\bW_L, \bW_R](0^\pm)&
&\Longrightarrow&
\RS_{c^-,c^+}[\bW_-, \bW_+](0^\pm) = \bW_\pm.
\end{align}
Observe that coherence can be thought of as a stability property: the ordered pair of traces is, in a sense, a fixed point for $\RS_{c^-,c^+}$.

\begin{proposition}
The Riemann solver $\RS_{c^-,c^+} \colon D_{\RS} \to \BV(\R;\calW)$ is coherent and $\Lloc{1}$-continuous with respect to the initial datum.
\end{proposition}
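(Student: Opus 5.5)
We prove coherence and $\Lloc1$-continuity separately, both reduced to properties of the interface map $(\bW_L,\bW_R)\mapsto(\hat{\bW},\check{\bW})$ and of the classical solvers $\RS_\pm$ of Definition~\ref{d:Peryphery}.

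\emph{Coherence.} Fix $(\bW_L,\bW_R)\in D_{\RS}$ and set $\bW_\pm\doteq\RS_{c^-,c^+}[\bW_L,\bW_R](0^\pm)$. By \eqref{e:Landmarks} we have $\bW_-=\hat{\bW}$, $\bW_+=\check{\bW}$, $w_-=w_+=w_L$ and $c^\pm F_{w_L}(\rho_\pm)=\mathsf{Q}(\bW_L,\bW_R)$. We must show that $\hat{\bW}(\bW_-,\bW_+)=\bW_-$ and $\check{\bW}(\bW_-,\bW_+)=\bW_+$. Since $w_+=w_-$, the definition \eqref{e:w*} gives $\bW_*(\bW_+,w_-)=\bW_+$, so $\rho_*(\bW_+,w_-)=\rho_+$.

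We then compute $\mathsf{Q}(\bW_-,\bW_+)$ through \eqref{e:Qm}--\eqref{e:Q}, which are defined by cases on the position of $\rho_-$ with respect to $\mathsf{r}_-(w_L)$ and of $\rho_+$ with respect to $\mathsf{r}_+(w_L)$.
\begin{itemize}
\item[(a)] If $\rho_-\ge\mathsf{r}_-$, then $\mathsf{Q}_-(\bW_-)=\mathsf{F}_-\ge F_{w_L}(\rho_-)$.
\item[(b)] If $\rho_-<\mathsf{r}_-$, then $\bW_-$ came from case \ref{i:trivhat}, so $\bW_-=\bW_L$ and $\mathsf{Q}_-(\bW_-)=F_{w_L}(\rho_-)$.
\end{itemize}
The same dichotomy holds for $\rho_+$ relative to $\mathsf{r}_+$. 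In each combination, $\mathsf{Q}(\bW_-,\bW_+)\ge c^\pm F_{w_L}(\rho_\pm)$, because each entry of the min is at least the common value $\mathsf{Q}(\bW_L,\bW_R)$.

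Conversely, the entry attaining the min in $\mathsf{Q}(\bW_L,\bW_R)$ yields the reverse inequality. For instance, if $\mathsf{Q}(\bW_L,\bW_R)=c^-\mathsf{F}_-(w_L)$, then $\rho_-=\mathsf{r}_-$ and $c^-\mathsf{Q}_-(\bW_-)=\mathsf{Q}(\bW_L,\bW_R)$. Hence $\mathsf{Q}(\bW_-,\bW_+)=c^-F_{w_L}(\rho_-)$, so \ref{i:trivhat} and \ref{i:trivcheck} apply and give back $\bW_\pm$. The key point is that $\mathsf{Q}_\pm$ is monotone and $\mathsf{Q}_+(\bW_R,w_L)$ depends on $\bW_R$ only through $\rho_*$. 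The delicate subcase is when $\mathsf{Q}$ is attained by $\mathsf{F}_\pm$ at a trace lying in the opposite branch; there we use uniqueness of $\mathsf{r}_\pm$ together with strict concavity of $F_{w}$.

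\emph{$\Lloc1$-continuity.} We first show that $\mathsf{Q}$ is continuous on $D_{\RS}$. The functions $\mathsf{F}_\pm$ and $\mathsf{r}_\pm$ are continuous in $w$ by strict concavity and the implicit function theorem, or by a maximum-theorem argument. $\rho_*$ is continuous because $\rho_*=p^{-1}(w_L-v(\bW_R))$. The two branches of \eqref{e:Qm} and \eqref{e:Qp} agree at $\rho=\mathsf{r}_\pm$. Consequently $\hat{\bW}$ and $\check{\bW}$ are continuous, since they are obtained by inverting $F_{w_L}$ on the monotone branches $[\mathsf{r}_-,p^{-1}(w_L)]$ and $[0,\mathsf{r}_+]$, where the inverse is continuous.

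It remains to show the $\Lloc1$-continuity of $\RS_{\bar c}$ for fixed $\bar c$. This is a classical check: within each case \ref{L-C}--\ref{L-RC} the wave speeds $s$, $\lambda_w$, $V(v)$ and the state $\bW_M$ depend continuously on the data. Across case boundaries, for example when $w_L\to w_R$ or when $v(\bW_L)-v(\bW_R)$ changes sign, the vanishing wave has vanishing strength and the remaining waves match. Once both steps are in place, composition yields continuity in $\L1$ on bounded $\nu$-sets, since self-similar profiles with uniformly bounded values and finitely many waves whose positions depend continuously on the data converge in $\Lloc1$.

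We expect the main obstacle to be the coherence case analysis near $\mathsf{r}_\pm$, and the continuity of $\mathsf{Q}$ through the vacuum and through the switch between \ref{i:trivhat} and \ref{i:hat}. We will handle these by verifying that the two definitions coincide on the switching set.
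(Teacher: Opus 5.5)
\paragraph{Coherence.} Your coherence argument is sound and is essentially the paper's. Both proofs reduce to $\mathsf{Q}(\bW_-,\bW_+)=\mathsf{Q}(\bW_L,\bW_R)$, and both get ``$\geq$'' from $\mathsf{Q}_-(\bW_-)\geq F_{w_L}(\rho_-)$ and $\mathsf{Q}_+(\bW_+,w_L)\geq F_{w_L}(\rho_+)$. You obtain ``$\leq$'' directly from the entry realizing the minimum, whereas the paper argues by contradiction. In the paper, a strict inequality would force $\rho_->\mathsf{r}_-(w_L)$ and $\rho_+<\mathsf{r}_+(w_L)$, which leads to $\mathsf{Q}(\bW_L,\bW_R)>\mathsf{Q}(\bW_L,\bW_R)$. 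Your other three cases are routine. The ``delicate subcase'' you mention never occurs, because $\mathsf{Q}=c^\pm\mathsf{F}_\pm(w_L)$ forces $\rho_\pm=\mathsf{r}_\pm(w_L)$.

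\paragraph{The gap: $\Lloc{1}$-continuity.} Your claim that $\hat{\bW}$ and $\check{\bW}$ are continuous is false, so ``continuous traces, then compose with $\RS_\pm$'' breaks down. Here is a counterexample.
\begin{itemize}
\item Take $\rho_L<\mathsf{r}_-(w_L)$ and data with $c^-F_{w_L}(\rho_L)=c^+\mathsf{Q}_+(\bW_R,w_L)$. Then \ref{i:trivhat} gives $\hat{\bW}=\bW_L$, which lies on the free branch.
\item Perturb the data so that $c^+\mathsf{Q}_+(\bW_R^\varepsilon,w_L^\varepsilon)<c^-F_{w_L^\varepsilon}(\rho_L^\varepsilon)$. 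Now \ref{i:hat} applies, and $\hat{\bW}^\varepsilon$ lies on $[\mathsf{r}_-(w_L),p^{-1}(w_L)]$.
\item As $\varepsilon\to0$, $\hat{\bW}^\varepsilon$ converges to the congested state $(\tilde\rho_L,w_L)$ with $F_{w_L}(\tilde\rho_L)=F_{w_L}(\rho_L)$ and $\tilde\rho_L>\mathsf{r}_-(w_L)>\rho_L$. This is not $\bW_L$.
\end{itemize}
The same happens to $\check{\bW}$ at the switch between \ref{i:trivcheck} with $\rho_*>\mathsf{r}_+(w_L)$ and \ref{i:check}: the trace jumps from $\bW_*$ to a free-branch state. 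Your plan to verify that ``the two definitions coincide on the switching set'' therefore cannot succeed. They do not coincide, and the traces at $\nu=0^\mp$ genuinely jump.

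\paragraph{What is missing.} The paper's proof is built around the idea that rescues continuity. These switching sets are exactly where the traces are discontinuous, and there the jump is carried by a single $1$-shock inside $\RS_-[\bW_L^\varepsilon,\hat{\bW}^\varepsilon]$, joining $\bW_L^\varepsilon$ to $\hat{\bW}^\varepsilon$. Its speed is
\[
c^-\,\frac{F_{w_L^\varepsilon}(\hat\rho^\varepsilon)-F_{w_L^\varepsilon}(\rho_L^\varepsilon)}{\hat\rho^\varepsilon-\rho_L^\varepsilon}.
\]
The numerator tends to $0$, because $c^-F_{w_L^\varepsilon}(\hat\rho^\varepsilon)=\mathsf{Q}(\bW_L^\varepsilon,\bW_R^\varepsilon)$ and $\mathsf{Q}$ is continuous. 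The denominator stays bounded away from $0$. So the shock collapses onto $x=0$, and the region where the perturbed solution is not close to the limit one has vanishing measure. The same holds on the other side for the $1$-shock from $\check{\bW}^\varepsilon$ to $\bW_*^\varepsilon$ in $\RS_+$. Your proof of continuity of $\mathsf{Q}$ is the right first step. It must be followed by this case analysis at the switching sets, not by an appeal to continuity of $\hat{\bW}$ and $\check{\bW}$.
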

\noindent
\begin{proof}
We first prove that the Riemann solver $\RS_{c^-,c^+}$ is coherent in $D_{\RS}$.
To do so, it is sufficient to show that for any $(\bW_L,\bW_R) \in D_{\RS}$ we have
\begin{equation}
\label{e:CoheedandCambria}
\mathsf{Q}(\bW_-,\bW_+) = \mathsf{Q}(\bW_L,\bW_R)
\end{equation}
where $\bW_\pm = \RS_{c^-,c^+}[\bW_L,\bW_R](0^\pm)$.
Indeed, by \eqref{e:Landmarks}\textsubscript{$3$} and \eqref{e:CoheedandCambria} we have
\[c^\pm \, F_{w_\pm}\left(\rho_\pm\right) = \mathsf{Q}(\bW_-,\bW_+).\]
By \ref{i:trivhat}, \ref{i:trivcheck} and \eqref{e:Landmarks}\textsubscript{$1,2$} we have then $\RS_{c^-,c^+}[\bW_-,\bW_+](0^\pm) = \bW_\pm$ and therefore $\RS_{c^-,c^+}$ is coherent at $(\bW_L, \bW_R)$, see \eqref{e:PorcupineTree}.
We are therefore left to prove \eqref{e:CoheedandCambria}.
Suppose, by contradiction, that there exists $(\bW_L, \bW_R) \in D_{\RS}$ such that \eqref{e:CoheedandCambria} is not satisfied, namely
\begin{equation}
\label{e:Blackfield}
\mathsf{Q}(\bW_-,\bW_+) \neq \mathsf{Q}(\bW_L,\bW_R).
\end{equation}
By \ref{i:trivhat}, \eqref{e:hat}\textsubscript{$2$}, \eqref{e:w*}\textsubscript{$2$}, \ref{i:trivcheck} and \eqref{e:check}\textsubscript{$2$} we have $\hat{w}(\bW_L,\bW_R) = \check{w}(\bW_L,\bW_R) = w_L$, hence by \eqref{e:Landmarks}\textsubscript{$1,2$}
\begin{equation}
\label{e:Klone}    
w_\pm = w_L.
\end{equation}
This implies that $\bW_*(\bW_+,w_-) = \bW_+$, see \eqref{e:w*}.
By \eqref{e:Qm}, \eqref{e:Qp}, \eqref{e:Q}, \eqref{e:Landmarks}\textsubscript{$3$} and \eqref{e:Blackfield} we have that $\mathsf{Q}(\bW_-,\bW_+) > 
\mathsf{Q}(\bW_L,\bW_R) =
c^\pm \, F_{w_\pm}(\rho_\pm)$.
As a consequence, by \ref{i:hat}, \eqref{e:hat}\textsubscript{$1$}, \ref{i:check}, \eqref{e:check}\textsubscript{$1$} and \eqref{e:Klone} we have
\begin{align*}
&\rho_- > \mathsf{r}_-(w_L),&
&\rho_+ = \rho_*(\bW_+,w_-) < \mathsf{r}_+(w_L).
\end{align*}
The above conditions imply that
\begin{align*}
\hbox{either } \quad F_{w_L}(\rho_L) > F_{w_-}(\rho_-) \quad \hbox{ or } \quad
&\begin{cases}
F_{w_L}(\rho_L) \leq F_{w_-}(\rho_-),
\\
\rho_L > \mathsf{r}_-(w_L), 
\end{cases}
\\
\hbox{either } \quad F_{w_*}(\rho_*) > F_{w_+}(\rho_+) \quad \hbox{ or } \quad
&\begin{cases}
F_{w_*}(\rho_*) \leq F_{w_+}(\rho_+),
\\
\rho_* < \mathsf{r}_+(w_L).
\end{cases}
\end{align*}
In any case, we get $\mathsf{Q}_-(\bW_L) > F_{w_L}(\rho_-)$ and $\mathsf{Q}_+(\bW_R,w_L) > F_{w_L}(\rho_+)$, hence $\mathsf{Q}(\bW_L,\bW_R) > c^\pm \, F_{w_\pm}(\rho_\pm)$.
This gives a contradiction and therefore \eqref{e:CoheedandCambria} holds.
As a consequence, $\RS_{c^-,c^+}$ is coherent.

We now prove that the Riemann solver $\RS_{c^-,c^+}$ is $\Lloc1$-continuous with respect to the initial data in $D_{\RS}$.
The case $c^- = c^+$ is trivial and is therefore omitted.
Observe first that  $\mathsf{Q}$ is continuous because $\mathsf{Q}_-$ and $\mathsf{Q}_+$ are so.
By the continuity of the speed of propagation of the waves, it is sufficient to consider initial data for which the traces at $\nu=0$ are discontinuous.



More precisely, assume that $(\bW_L^\varepsilon, \bW_R^\varepsilon)$ converges to $(\bW_L,\bW_R)$ (as $\varepsilon$ tends to zero), but the traces $\bW_\pm^\varepsilon \doteq \RS_{c^-,c^+}[\bW_L^\varepsilon, \bW_R^\varepsilon](0^\pm)$ do not converge to $\bW_\pm \doteq \RS_{c^-,c^+}[\bW_L, \bW_R](0^\pm)$.
By \eqref{e:Landmarks}\textsubscript{$1,3$} and Definition~\ref{d:hat}, we have to consider the following cases:
\begin{itemize}[leftmargin=*]
\item
Assume that $\rho_L \leq \mathsf{r}_-(w_L)$ and $\mathsf{Q}(\bW_L,\bW_R) = c^- \, F_{w_L}(\rho_L)$, so that $\bW_- = \bW_L$.
Consider now a perturbed initial datum $(\bW_L^\varepsilon, \bW_R^\varepsilon)$ such that $\rho_L^\varepsilon \leq \mathsf{r}_-(w_L^\varepsilon)$, $\mathsf{Q}(\bW_L^\varepsilon,\bW_R^\varepsilon) < c^- \, F_{w_L^\varepsilon}(\rho_L^\varepsilon)$, so that $\bW_-^\varepsilon \neq \bW_L^\varepsilon$.
Observe that $\RS_-[\bW_L,\bW_-]$ is constant, whereas $\RS_-[\bW_L^\varepsilon,\bW_-^\varepsilon]$ has just one wave, a $1$-shock, with speed of propagation that converges to zero as $\varepsilon \to 0$.
This ensures the $\Lloc1$-convergence of $\RS_{c^-,c^+}[\bW_L^\varepsilon,\bW_R^\varepsilon]$ to $\RS_{c^-,c^+}[\bW_L,\bW_R]$.
\item 
Assume that $\rho_* \geq \mathsf{r}_+(w_L)$ and $\mathsf{Q}(\bW_L,\bW_R) = c^+ \, F_{w_L}(\rho_*)$, so that $\bW_+ = \bW_*$.
Consider now a perturbed initial datum $(\bW_L^\varepsilon, \bW_R^\varepsilon)$ such that $\rho_*^\varepsilon \geq \mathsf{r}_+(w_L^\varepsilon)$, $\mathsf{Q}(\bW_L^\varepsilon,\bW_R^\varepsilon) < c^+ \, F_{w_L^\varepsilon}(\rho_*^\varepsilon)$, so that $\bW_+^\varepsilon \neq \bW_*^\varepsilon$.
Observe that $\RS_+[\bW_+,\bW_*]$ is constant, whereas $\RS_+[\bW_+^\varepsilon,\bW_*^\varepsilon]$ has just one wave, a $1$-shock, with speed of propagation that converges to zero as $\varepsilon \to 0$.
This ensures the $\Lloc1$-convergence of $\RS_{c^-,c^+}[\bW_L^\varepsilon,\bW_R^\varepsilon]$ to $\RS_{c^-,c^+}[\bW_L,\bW_R]$.
\end{itemize}
This concludes the proof.
\end{proof}

\section{Case studies for Riemann problems}
\label{s:4}

In this section, we present the construction of weak solutions to some Riemann problems \eqref{e:Riemann}.
The initial data are chosen to illustrate the main properties of the Riemann solver $\RS_{c^-,c^+}$ introduced in Definition~\ref{d:Haken}.
The values $\ov^\pm$ are not fixed explicitly; rather, we assume that they are sufficiently large to ensure the validity of the construction below.
We consider the following choices for the functions $p$ and $V$, together with their respective domains of definition:
\begin{align*}
&p(\rho) = \ln(\rho), \ D_p=(0,1], \ I_p=(-\infty,0],&
&V(v) = v, \ D_V = [0,\infty),
\\
&p(\rho) = \rho^\gamma,\ D_p=[0,\infty), \ I_p=[0,\infty),&
&V(v) = \sinh(v), \ D_V = [0,\infty),
\\
&p(\rho) = \left(\frac{\rho}{1-\rho}\right)^{\gamma},\ D_p=[0,1), \ I_p=[0,\infty),
\end{align*}
with $\gamma > 0$.
Clearly, the functions $p$ and $V$ defined above satisfy conditions \eqref{e:p} and \eqref{e:V}, respectively.
Moreover, condition \eqref{e:pV1} reduces to the following inequalities: for any $\rho \in D_p\setminus\{0\}$ and $v \in D_V$ we have
\begin{align*}
p(\rho) = \ln(\rho): \ & \frac{V''(v)}{V'(v)} < 1,&
p(\rho) = \rho^\gamma: \ & \frac{V''(v)}{V'(v)} < \frac{\gamma+1}{\gamma \, \rho^\gamma},
\\
p(\rho) = \left(\frac{\rho}{1-\rho}\right)^{\!\!\gamma}: \ & \frac{V''(v)}{V'(v)} < \left(1+\frac{1}{\!\!\gamma}\right)  \left(\frac{1}{\rho}-1\right)^{\gamma},
\\
V(v) = \sinh(v): \ & \tanh(v) < \frac{2p'(\rho)+\rho\,p''(\rho)}{\rho \, p'(\rho)^2},&
V(v) = v: \ & 0 < \frac{2p'(\rho)+\rho\,p''(\rho)}{\rho \, p'(\rho)^2}.
\end{align*}
We recall that the case with $p(\rho) = \ln(\rho)$ is analyzed in detail in \cite{ChiarelloFagioliRosini1}.

We represent the construction of the solutions to Riemann problems in the $(\rho,f)$-plane.
Observe that $\bW = (\rho,w)$ is represented in the $(\rho,f)$-plane by the two points $(\rho,f^-) = \left( \rho , c^- \, F_w(\rho) \right)$ and $(\rho,f^+) = \left( \rho , c^+ \, F_w(\rho) \right)$.
This motivates the introduction of the following notation: for a given $\bW_L \in \calW(\ov^-)$, take
\begin{align*}
&\bF_L^- \doteq \left( \rho_L , c^- \, F_{w_L}(\rho_L) \right)&
&\bF_L^+ \doteq \left( \rho_L , c^+ \, F_{w_L}(\rho_L) \right).
\end{align*}
We use analogous notation for $\bF_R^\pm$, $\bF_*^\pm$, and so on.

We start by considering a constant initial datum $\bW_o$, but with $c^- \neq c^+$, to show the impact of $c^\pm$ on the weak solution $u(t,x) = \RS_{c^-,c^+}[\bW_o, \bW_o](x/t)$, that the maximum principle is not satisfied and that the total variation of the weak solution $u$ expressed in the Riemann invariant coordinates \eqref{e:vw} may increase instantaneously.

\begin{example}
\label{ex:1}
Consider the Riemann problem~\eqref{e:Riemann} with
\begin{align}
\label{e:iniconst}
&c^-=2,&
&c^+=3,&
&\bW_L = \bW_R = \bW_o.
\end{align}
We detail below two possible choices of $(\bW_o,p,V)$.
\begin{figure}[!ht]
\centering
\resizebox{.99\linewidth}{!}{
\begin{tikzpicture}[every node/.style={anchor=south west,inner sep=0pt},x=1mm, y=1mm]

\node at (0,0) {\includegraphics[width=35mm]{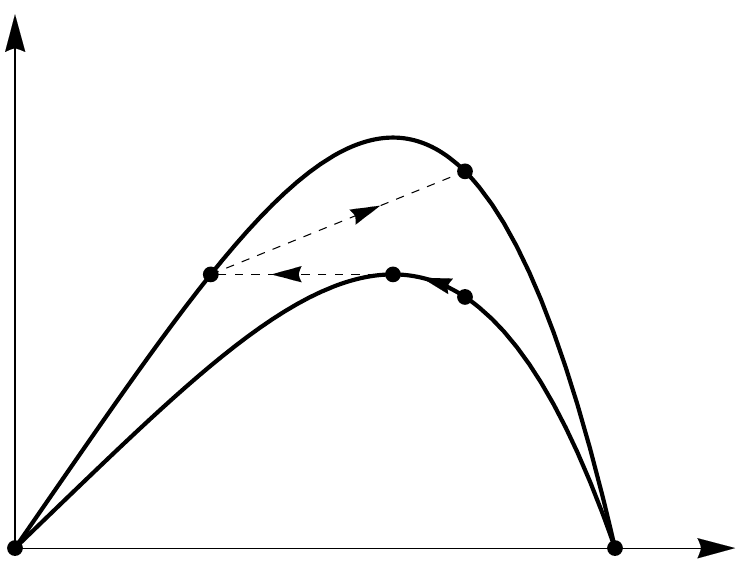}};
\node[below] at (35,0) {\strut $\rho$};
\node[left] at (-1,26.25) {\strut $c\,F_w$};
\draw[latex-] (22.5,13) --++ (5,3) node[right] {\strut $\bF_o^-$};
\node at (22,18) {\strut $\bF_o^+$};
\node at (16.5,8) {\strut $\hat{\bF}^-$};
\node at (6,12.5) {\strut $\check{\bF}^+$};

\begin{scope}[shift={(37,0)}]
\node at (0,0) {\includegraphics[width=35mm]{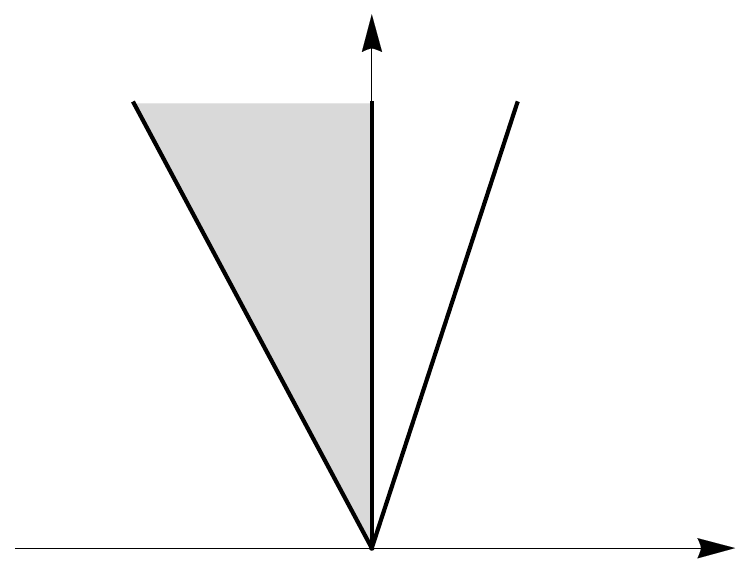}};
\node[below] at (35,0) {\strut $x$};
\node[left] at (16,26.25) {\strut $t$};
\node at (5,7) {\strut $\bW_o$};
\node at (25,4) {\strut $\bW_o$};
\draw[latex-] (17,22) --++ (-5,2) node[left] {\strut $\hat{\bW}$};
\node at (18,16) {\strut $\check{\bW}$};
\end{scope}

\begin{scope}[shift={(74,0)}]
\node at (0,0) {\includegraphics[width=35mm]{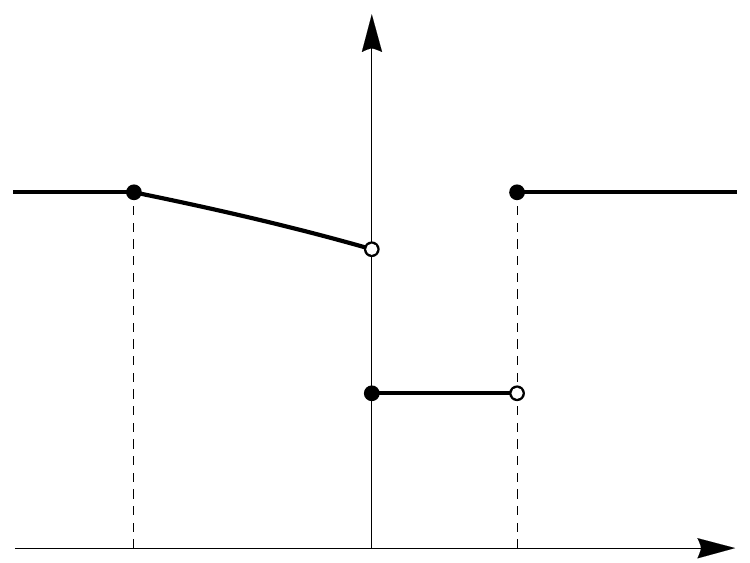}};
\node[below] at (35,0) {\strut $x$};
\node[below] at (8,0) {\strut $x_1$};
\node[below] at (25,0) {\strut $x_2$};
\node[left] at (16,26.25) {\strut $\rho$};
\node at (2,18) {\strut $\rho_o$};
\node at (28,18) {\strut $\rho_o$};
\node at (18.5,13) {\strut $\hat{\rho}$};
\node at (20,8) {\strut $\check{\rho}$};
\end{scope}

\begin{scope}[shift={(111,0)}]
\node at (0,0) {\includegraphics[width=35mm]{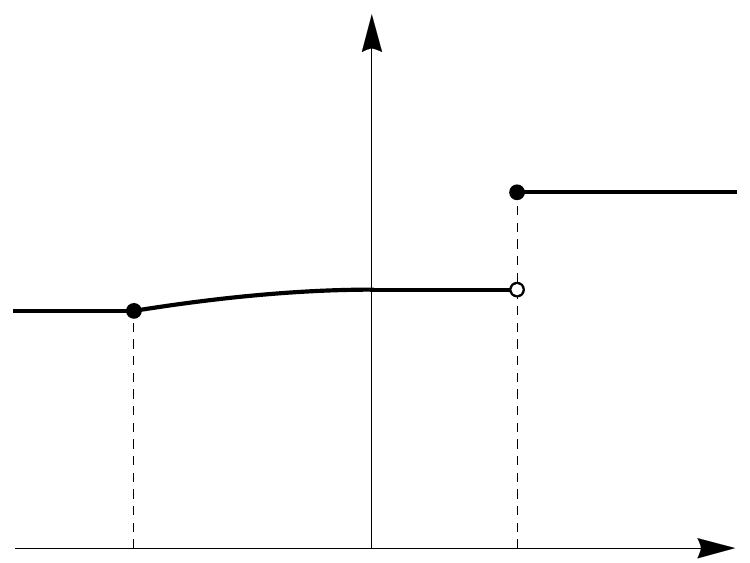}};
\node[below] at (35,0) {\strut $x$};
\node[below] at (8,0) {\strut $x_1$};
\node[below] at (25,0) {\strut $x_2$};
\node[left] at (16,26.25) {\strut $c\,F_w$};
\node at (19,12.5) {\strut $\mathsf{Q}$};
\node[right] at (1,14.5) {\strut $F_{w_o}^-(\rho_o)$};
\node at (22,18) {\strut $F_{w_o}^+(\rho_o)$};
\end{scope}

\begin{scope}[shift={(148,0)}]
\node at (0,0) {\includegraphics[width=35mm]{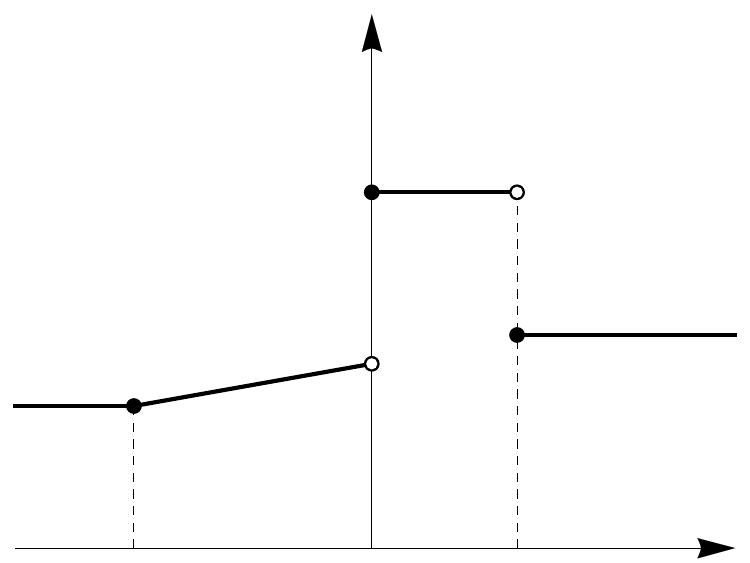}};
\node[below] at (35,0) {\strut $x$};
\node[below] at (8,0) {\strut $x_1$};
\node[below] at (25,0) {\strut $x_2$};
\node[left] at (16,26.25) {\strut $c\,V$};
\node at (1,7) {\strut $c^-V_o$};
\node at (26,10.5) {\strut $c^+V_o$};
\draw[latex-] (17,10) --++ (-6,4) node[left, inner sep=2pt] {\strut $c^-\hat{V}$};
\node at (18,17) {\strut $c^+\check{V}$};
\end{scope}

\end{tikzpicture}
}
\caption{Representation of the weak solution $u$ corresponding to \eqref{e:iniconst} with $\bW_o = (0.75,1)$, $p(\rho) = \rho^3$ and $V(v) = v$.
The profiles of $\rho$, $c\,F_w$ and $c\,V$ are computed at time $t=1$.
Above we take $V_o = V(v(\bW_o))$, $\hat{V} = V(v(\hat{\bW}(\bW_o,\bW_o)))$, $\check{V} = V(v(\check{\bW}(\bW_o,\bW_o)))$, $\mathsf{Q}=\mathsf{Q}(\bW_L,\bW_R)$ and $F_w^\pm = c^\pm \, F_w$.
Moreover we let $x_1 = c^- \, \lambda_{w_o}(\rho_o) \approx -1.375$ and $x_2 = c^+ \, s(\check{\bW}(\bW_o,\bW_o),\bW_o) \approx 0.840$.
} 
\label{f:sol-pow-id}
\end{figure}
\begin{enumerate}[label={{\rm\bf(\Roman*)}}, leftmargin=*]
\item 
\label{i:sol-log-sinh}
In \figurename~\ref{f:sol-pow-id} we show the construction of the weak solution $u$ in the case $\bW_o = (0.75,1)$, $p(\rho) = \rho^3$ and $V(v) = v$.
In this case $\rho_L > \mathsf{r}_-(w_L)$, $\bW_*(\bW_R,w_L) = \bW_R$ and $\rho_*(\bW_R,w_L) > \mathsf{r}_+(w_L)$, therefore by definition, see \eqref{e:Qm}, \eqref{e:Qp} and \eqref{e:Q}, we have
\begin{align*}
&\mathsf{Q}_-(\bW_L) = \mathsf{F}_-(w_o),&
&\mathsf{Q}_+(\bW_R,w_L) = F_{w_o}(\rho_o),&
&\mathsf{Q}(\bW_L,\bW_R) = c^- \, \mathsf{F}_-(w_o).
\end{align*}
By \ref{i:hat} and \ref{i:check} we have then $\hat{\bW}(\bW_L,\bW_R) = (\mathsf{r}_-(w_o),w_o)$ and $\check{\bW}(\bW_L,\bW_R) = (F_{w_o}^{-1}(\frac{c^-}{c^+} \, \mathsf{F}_-(w_o)) , w_o)$.
Therefore, $u$ has a $1$-rarefaction in the cone $c^- \, \lambda_{w_o}(\rho_o) \leq x/t \leq c^- \, \lambda_{w_o}(\mathsf{r}_-(w_o)) = 0$, a stationary $c$-wave at $x=0$ and a $1$-shock at $x = c^+ \, s(\check{\bW}(\bW_o,\bW_o),\bW_o) \, t$, with $s$ given in \eqref{e:s}.
\begin{figure}[!ht]
\centering
\resizebox{.25\linewidth}{!}{
\begin{tikzpicture}[every node/.style={anchor=south west,inner sep=0pt},x=1mm, y=1mm]

\node at (0,0) {\includegraphics[width=35mm]{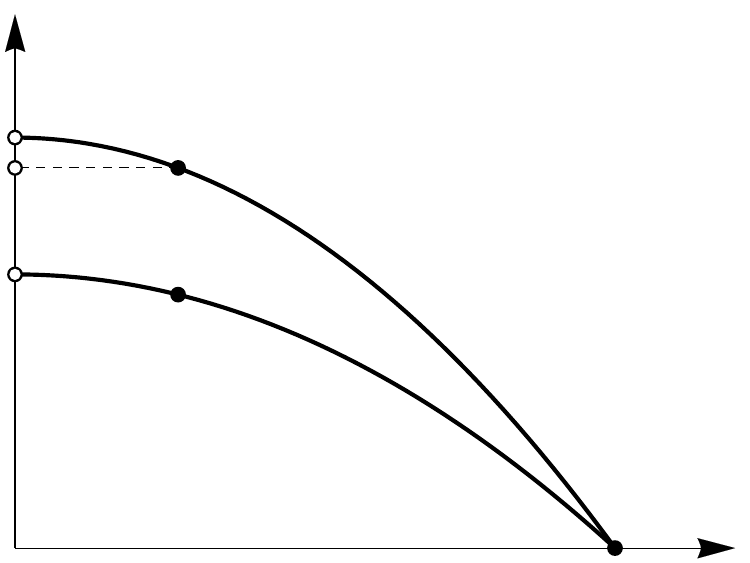}};
\node[below] at (35,0) {\strut $\rho$};
\node[left] at (-1,26.25) {\strut $c\,F_w$};
\node at (9,12) {\strut $\bF_o^-$};
\node at (8,18) {\strut $\bF_o^+$};

\end{tikzpicture}
}
\caption{
In the case corresponding to \eqref{e:iniconst} with $\bW_o = (0.1,-1)$, $p(\rho) = \ln(\rho)$ and $V(v) = \sinh(v)$, the Riemann solver given in Definition~\ref{d:Haken} is not well defined!
} 
\label{f:sol-log-sinh}
\end{figure}
\item
In \figurename~\ref{f:sol-log-sinh} we consider the case $\bW_o = (0.1,-1)$, $p(\rho)=\ln(\rho)$ and $V(v)=\sinh(v)$.
In this case $\rho_L > \mathsf{r}_-(w_L)$, $\bW_*(\bW_R,w_L) = \bW_R$ and $\rho_*(\bW_R,w_L) > \mathsf{r}_+(w_L)$, therefore by definition, see \eqref{e:Qm}, \eqref{e:Qp} and \eqref{e:Q}, we have
\begin{align*}
&\mathsf{Q}_-(\bW_L) = \mathsf{F}_-(w_o),&
&\mathsf{Q}_+(\bW_R,w_L) = F_{w_o}(\rho_o),&
&\mathsf{Q}(\bW_L,\bW_R) = c^- \, \mathsf{F}_-(w_o).
\end{align*}
On one hand, by \ref{i:hat} we have that $\hat{\bW}(\bW_L,\bW_R) = (\mathsf{r}_-(w_o),w_o)$.
On the other hand, $\mathsf{Q}(\bW_L,\bW_R) \neq c^+ \, F_{w_o}(\rho_o)$ and condition \eqref{e:check} has no solution.
Hence $\check{\bW}(\bW_L,\bW_R)$ is not well defined.
As a consequence $(\bW_o,\bW_o)$ is not in $D_{\RS}$ and $\RS_{c^-,c^+}[\bW_o,\bW_o]$ is not well defined.
\end{enumerate}
\end{example}

\begin{example}
\label{ex:2}
We consider the two choices of $(p,V)$ as in Example~\ref{ex:1}, and the data set given in \eqref{e:iniconst}, but with the values of $c^-$ and $c^+$ inverted, that is
\begin{align}
\label{e:iniconst-inv}
&c^- = 3,&
&c^+ = 2,&
&\bW_L = \bW_R = \bW_o.
\end{align}
This choice leads to a qualitatively different structure of the weak solution to the Riemann problem.
\begin{figure}[!ht]
\centering
\resizebox{.99\linewidth}{!}{
\begin{tikzpicture}[every node/.style={anchor=south west,inner sep=0pt},x=1mm, y=1mm]

\node at (0,0) {\includegraphics[width=35mm]{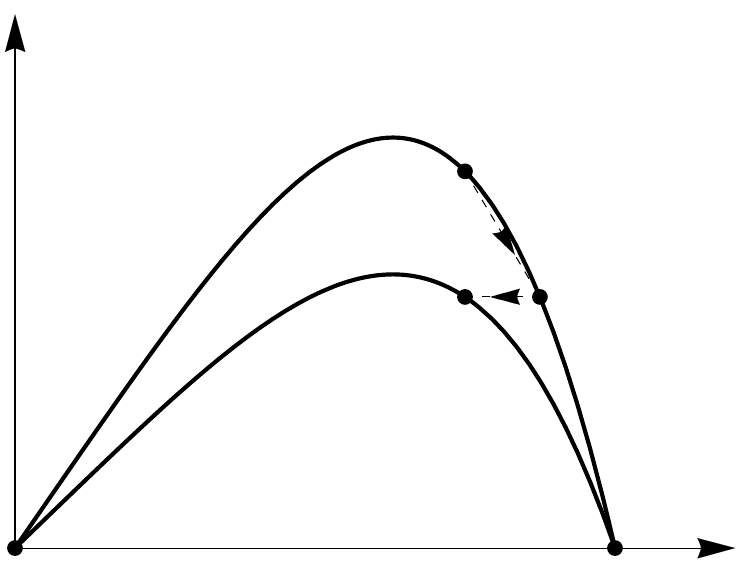}};
\node[below] at (35,0) {\strut $\rho$};
\node[left] at (-1,26.25) {\strut $c\,F_w$};
\node at (17.5,8) {\strut $\bF_o^+$};
\node at (22,18) {\strut $\bF_o^-$};
\node at (26.5,10) {\strut $\hat{\bF}^-$};

\begin{scope}[shift={(37,0)}]
\node at (0,0) {\includegraphics[width=35mm]{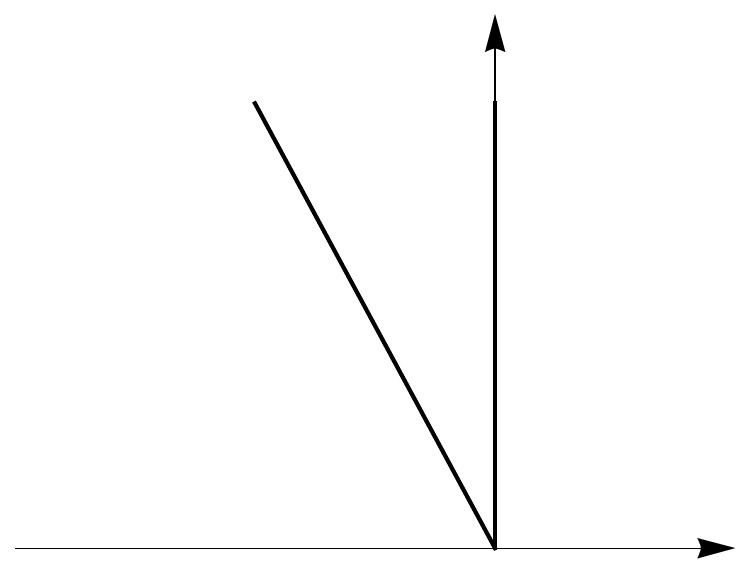}};
\node[below] at (35,0) {\strut $x$};
\node[left] at (21,26.25) {\strut $t$};
\node at (5,7) {\strut $\bW_o$};
\node at (26,10) {\strut $\bW_o$};
\node at (16,16) {\strut $\hat{\bW}$};
\end{scope}

\begin{scope}[shift={(74,0)}]
\node at (0,0) {\includegraphics[width=35mm]{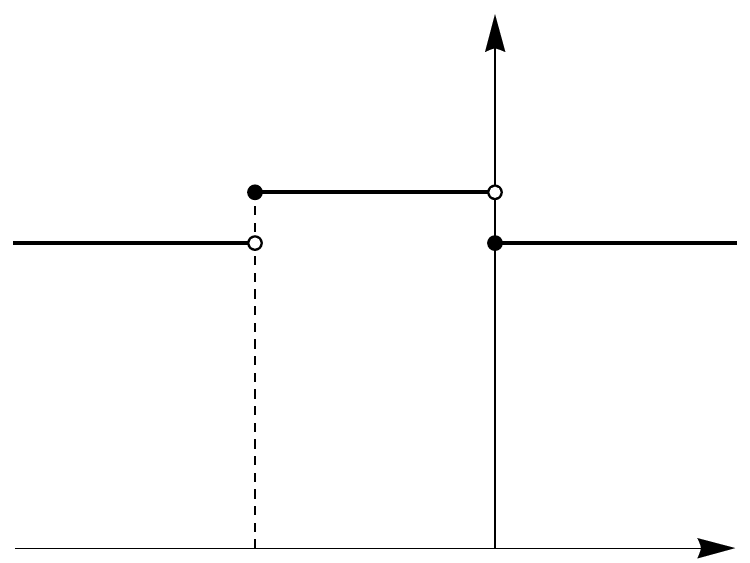}};
\node[below] at (35,0) {\strut $x$};
\node[below] at (13,0) {\strut $x_1$};
\node[left] at (21,26.25) {\strut $\rho$};
\node at (3,15) {\strut $\rho_o$};
\node at (28,15) {\strut $\rho_o$};
\node at (17,17) {\strut $\hat{\rho}$};
\end{scope}

\begin{scope}[shift={(111,0)}]
\node at (0,0) {\includegraphics[width=35mm]{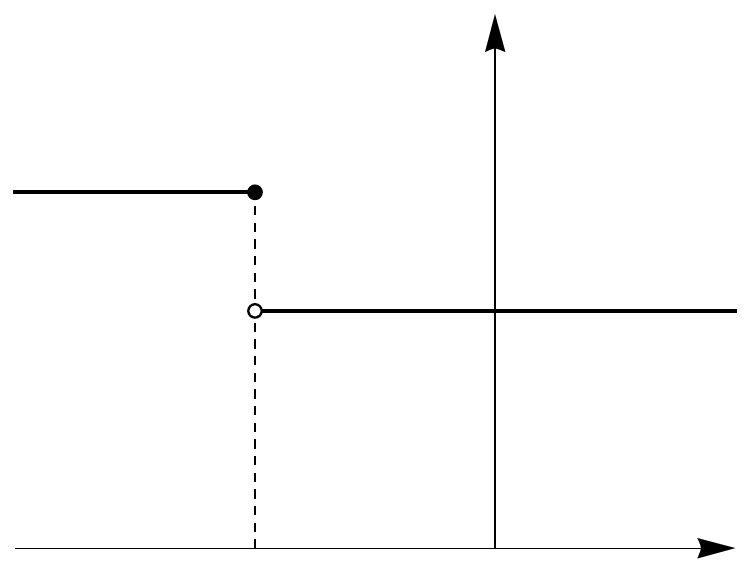}};
\node[below] at (35,0) {\strut $x$};
\node[below] at (13,0) {\strut $x_1$};
\node[left] at (21,26.25) {\strut $c\,F_w$};
\node[right] at (1,20) {\strut $F_{w_o}^-(\rho_o)$};
\node at (23,12) {\strut $F_{w_o}^+(\rho_o)$};
\end{scope}

\begin{scope}[shift={(148,0)}]
\node at (0,0) {\includegraphics[width=35mm]{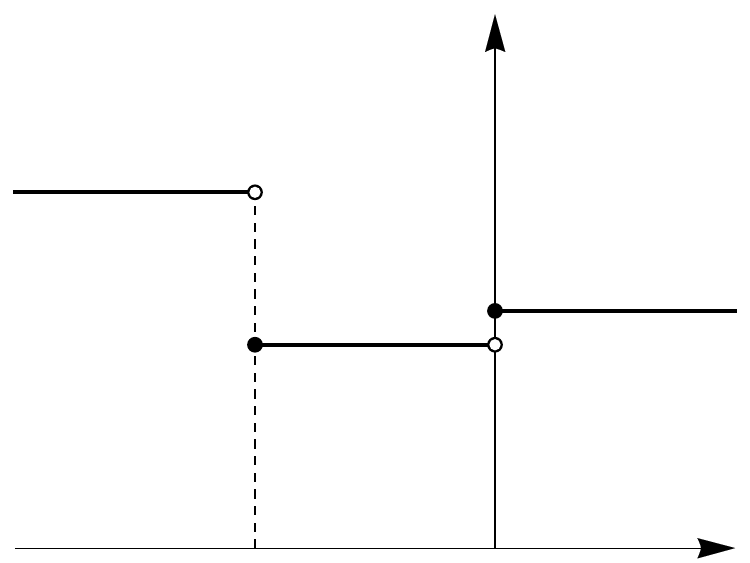}};
\node[below] at (35,0) {\strut $x$};
\node[below] at (13,0) {\strut $x_1$};
\node[left] at (23,26.25) {\strut $c\,V$};
\node at (1,17) {\strut $c^-V_o$};
\node at (26,12) {\strut $c^+V_o$};
\node at (14,10) {\strut $c^-\hat{V}$};
\end{scope}

\end{tikzpicture}
}
\caption{Representation of the weak solution $u$ corresponding to \eqref{e:iniconst-inv} with $\bW_o = (0.75,1)$, $p(\rho) = \rho^3$ and $V(v) = v$.
The profiles of $\rho$, $c\,F_w$ and $c\,V$ are computed at time $t=1$.
Above we take $V_o = V(v(\bW_o))$, $\hat{V} = V(v(\hat{\bW}(\bW_o,\bW_o)))$ and $F_w^\pm = c^\pm \, F_w$.
Moreover we let $x_1 = c^- \, s(\bW_o,\hat{\bW}(\bW_o,\bW_o)) \approx -3.473$.}
\label{f:sol-pow-id-inv}
\end{figure}
\begin{enumerate}[label={{\rm\bf(\Roman*)}}, leftmargin=*]
\item 
\label{i:sol-pow-id-inv}
In \figurename~\ref{f:sol-pow-id-inv} we show the construction of the weak solution $u$ in the case $\bW_o = (0.75,1)$, $p(\rho) = \rho^3$ and $V(v) = v$.
In this case $\rho_L > \mathsf{r}_-(w_L)$, $\bW_*(\bW_R,w_L) = \bW_R$ and $\rho_*(\bW_R,w_L) > \mathsf{r}_+(w_L)$, therefore by definition, see \eqref{e:Qm}, \eqref{e:Qp} and \eqref{e:Q}, we have $\mathsf{Q}_-(\bW_L) = \mathsf{F}_-(w_o)$, $\mathsf{Q}_+(\bW_R,w_L) = F_{w_o}(\rho_o)$ and $\mathsf{Q}(\bW_L,\bW_R) = c^+ \, F_{w_o}(\rho_o)$.
By \ref{i:hat} and \ref{i:trivcheck} we have then 
\begin{align*}
&\hat{\bW}(\bW_L,\bW_R) = \left(F_{w_o}^{-1}\left(\frac{c^+}{c^-} \, F_{w_o}(\rho_o)\right) , w_o\right),&
&\check{\bW}(\bW_L,\bW_R) = \bW_o.    
\end{align*}
Therefore, $u$ has a $1$-shock at $x = c^- \, s(\bW_o,\hat{\bW}(\bW_o,\bW_o)) \, t$ and a stationary $c$-wave at $x=0$.
\begin{figure}[!ht]
\centering
\resizebox{.99\linewidth}{!}{
\begin{tikzpicture}[every node/.style={anchor=south west,inner sep=0pt},x=1mm, y=1mm]

\node at (0,0) {\includegraphics[width=35mm]{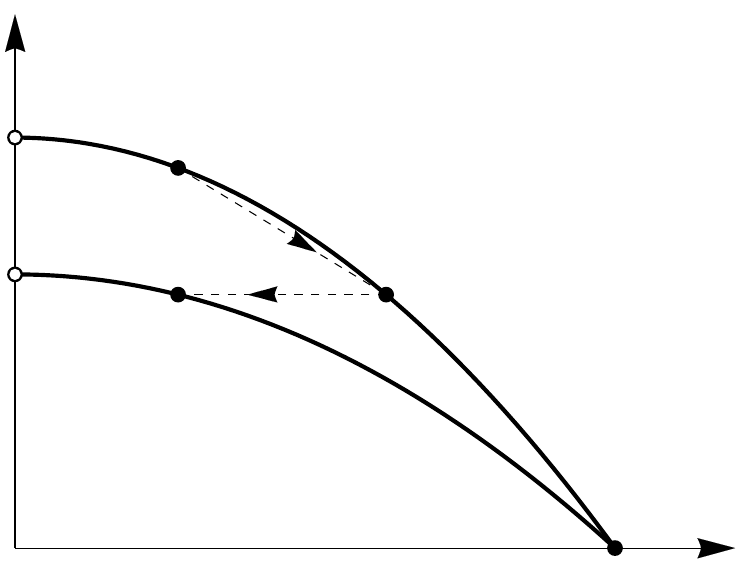}};
\node[below] at (35,0) {\strut $\rho$};
\node[left] at (-1,26.25) {\strut $c\,F_w$};
\node at (7,18.5) {\strut $\bF_o^-$};
\node at (6,13) {\strut $\bF_o^+$};
\node at (19,11.5) {\strut $\hat{\bF}^-$};

\begin{scope}[shift={(37,0)}]
\node at (0,0) {\includegraphics[width=35mm]{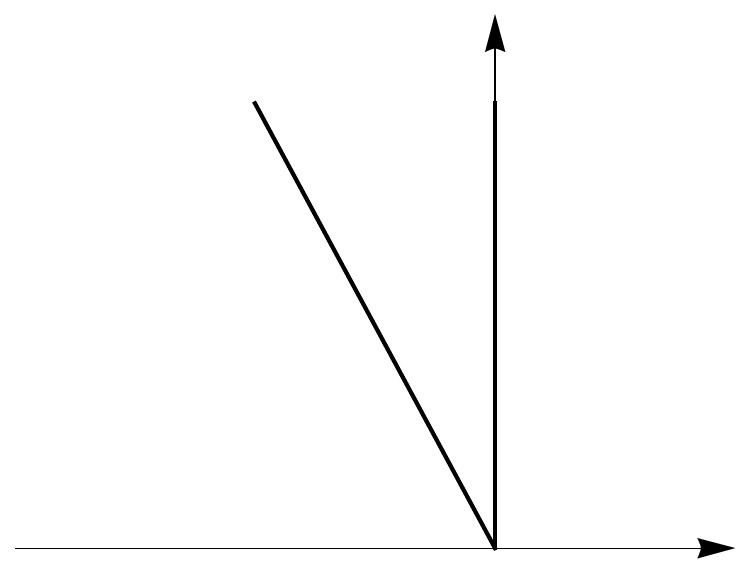}};
\node[below] at (35,0) {\strut $x$};
\node[left] at (21,26.25) {\strut $t$};
\node at (5,7) {\strut $\bW_o$};
\node at (25,10) {\strut $\bW_o$};
\node at (17,15) {\strut $\hat{\bW}$};
\end{scope}

\begin{scope}[shift={(74,0)}]
\node at (0,0) {\includegraphics[width=35mm]{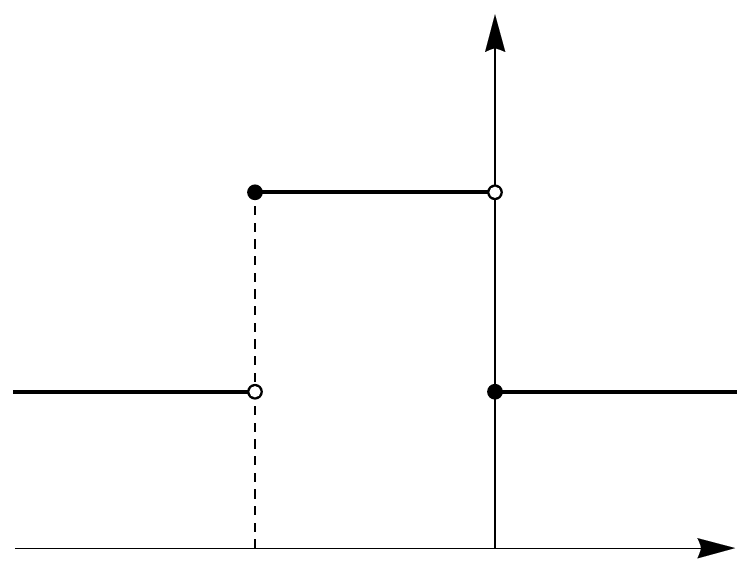}};
\node[below] at (35,0) {\strut $x$};
\node[below] at (13,0) {\strut $x_1$};
\node[left] at (21,26.25) {\strut $\rho$};
\node at (3,8) {\strut $\rho_o$};
\node at (28,8) {\strut $\rho_o$};
\node at (17,17.5) {\strut $\hat{\rho}$};
\end{scope}

\begin{scope}[shift={(111,0)}]
\node at (0,0) {\includegraphics[width=35mm]{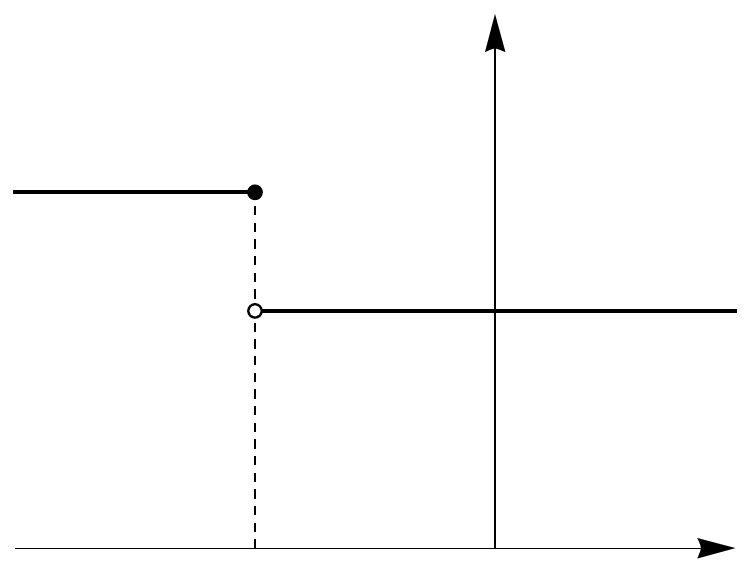}};
\node[below] at (35,0) {\strut $x$};
\node[below] at (13,0) {\strut $x_1$};
\node[left] at (21,26.25) {\strut $c\,F_w$};
\node[right] at (1,20) {\strut $F_{w_o}^-(\rho_o)$};
\node[left] at (29.5,14.5) {\strut $F_{w_o}^+(\rho_o)$};
\end{scope}

\begin{scope}[shift={(148,0)}]
\node at (0,0) {\includegraphics[width=35mm]{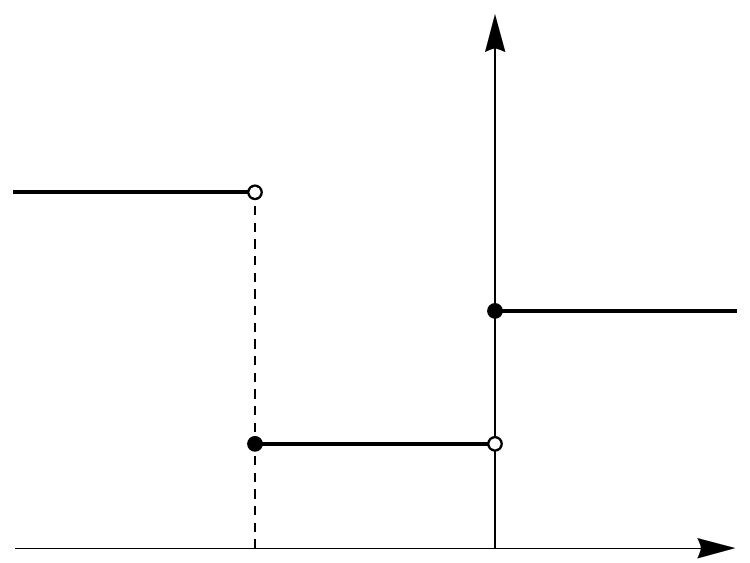}};
\node[below] at (35,0) {\strut $x$};
\node[below] at (13,0) {\strut $x_1$};
\node[left] at (21,26.25) {\strut $c\,V$};
\node at (1,17.5) {\strut $c^-V_o$};
\node at (26,12) {\strut $c^+V_o$};
\node at (14,5.5) {\strut $c^-\hat{V}$};
\end{scope}

\end{tikzpicture}
}
\caption{Representation of the weak solution $u$ corresponding to \eqref{e:iniconst-inv} with $\bW_o = (0.1,-1)$, $p(\rho) = \ln(\rho)$ and $V(v) = \sinh(v)$.
The profiles of $\rho$, $c\,F_w$ and $c\,V$ are computed at time $t=1$.
Above we use the same notation as in \figurename~\ref{f:sol-pow-id-inv} and $x_1 \approx -1.336$.}
\label{f:sol-log-sinh-inv}
\end{figure}
\item
In \figurename~\ref{f:sol-log-sinh-inv} we consider the case $\bW_o = (0.1,-1)$, $p(\rho)=\ln(\rho)$ and $V(v)=\sinh(v)$.
The construction is similar to the previous case and is omitted.
\end{enumerate}
\end{example}

In the next example, we consider a Riemann initial datum away from the vacuum that leads to a weak solution that develops a vacuum state immediately for $t>0$.
\begin{figure}[!ht]
\centering
\resizebox{.99\linewidth}{!}{
\begin{tikzpicture}[every node/.style={anchor=south west,inner sep=0pt},x=1mm, y=1mm]

\node at (0,0) {\includegraphics[width=35mm]{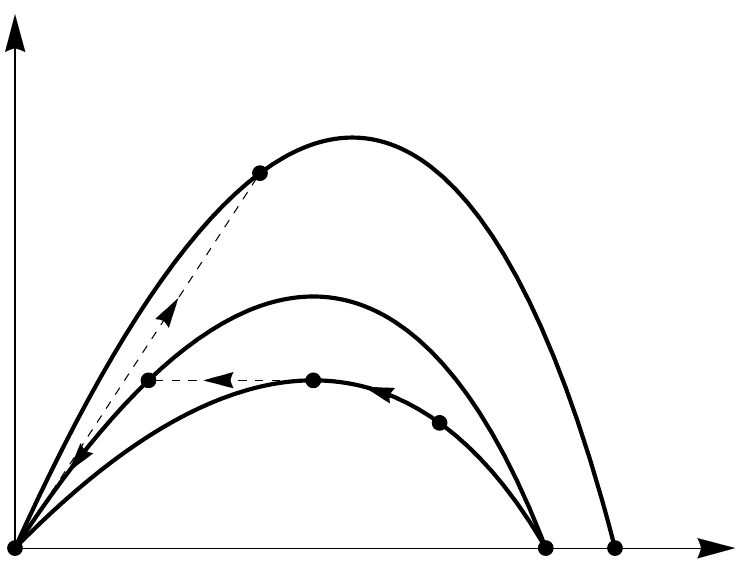}};
\node[below] at (35,0) {\strut $\rho$};
\node[left] at (-1,26.25) {\strut $c\,F_w$};
\node at (21,6) {\strut $\bF_L^-$};
\node at (8,19) {\strut $\bF_R^+$};
\node at (13,8) {\strut $\hat{\bF}^-$};
\draw[latex-] (7,8) --++ (8,-4) node[right] {\strut $\check{\bF}^+$};

\begin{scope}[shift={(37,0)}]
\node at (0,0) {\includegraphics[width=35mm]{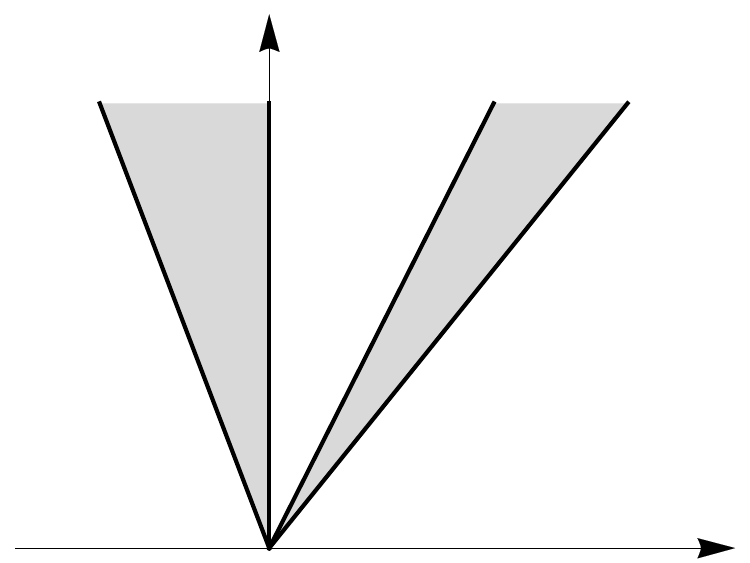}};
\node[below] at (35,0) {\strut $x$};
\node[left] at (10,26.25) {\strut $t$};
\node at (2,5) {\strut $\bW_L$};
\node at (26,7) {\strut $\bW_R$};
\draw[latex-] (12.5,22) --++ (-6,2) node[left] {\strut $\hat{\bW}$};
\node at (15,16) {\strut $\check{\bW}$};
\draw[latex-] (29.5,21.5) --++ (4,2) node[right] {\strut $\bf0$};

\end{scope}

\begin{scope}[shift={(74,0)}]
\node at (0,0) {\includegraphics[width=35mm]{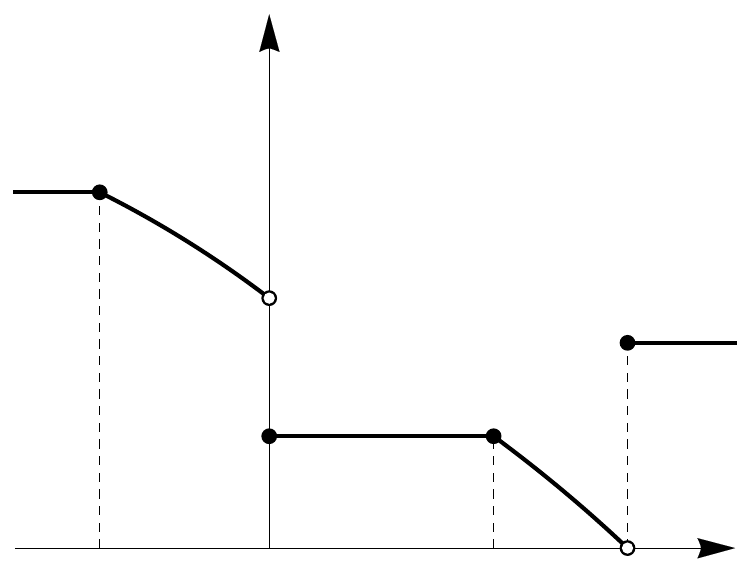}};
\node[below] at (35,0) {\strut $x$};
\node[below] at (5,0) {\strut $x_1$};
\node[below] at (24,0) {\strut $x_2$};
\node[below] at (30,0) {\strut $x_3$};
\node[left] at (10,26.25) {\strut $\rho$};
\node at (1,17.5) {\strut $\rho_L$};
\node at (30,10.5) {\strut $\rho_R$};
\node at (14,11) {\strut $\hat{\rho}$};
\node at (17,6) {\strut $\check{\rho}$};
\end{scope}

\begin{scope}[shift={(111,0)}]
\node at (0,0) {\includegraphics[width=35mm]{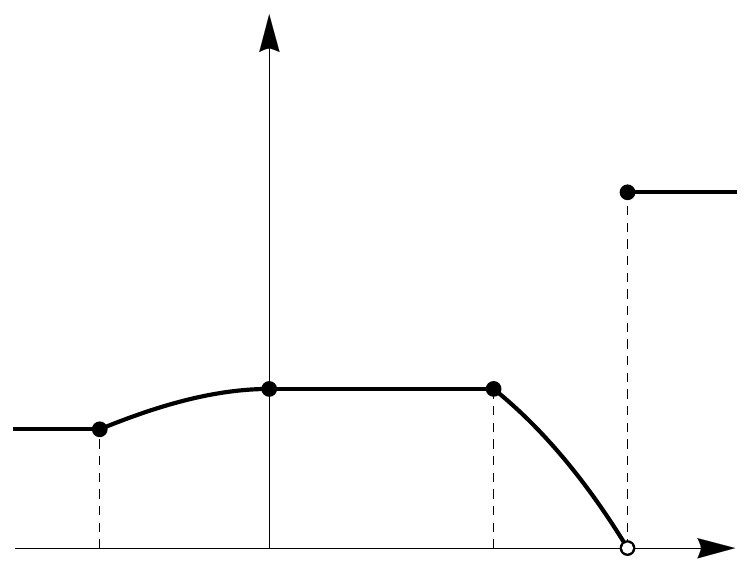}};
\node[below] at (35,0) {\strut $x$};
\node[below] at (5,0) {\strut $x_1$};
\node[below] at (24,0) {\strut $x_2$};
\node[below] at (30,0) {\strut $x_3$};
\node[left] at (10,26.25) {\strut $c\,F_w$};
\draw[latex-] (2,7) --++ (2,9) node {\strut $F_{w_L}^-(\rho_L)$};
\node[left] at (35,20) {\strut $F_{w_R}^+(\rho_R)$};
\node at (16,8) {\strut $\mathsf{Q}$};
\end{scope}

\begin{scope}[shift={(148,0)}]
\node at (0,0) {\includegraphics[width=35mm]{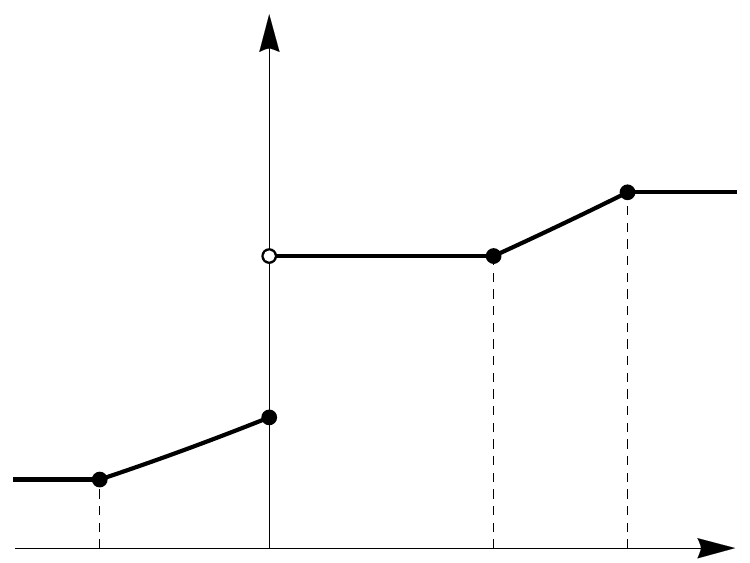}};
\node[below] at (35,0) {\strut $x$};
\node[below] at (5,0) {\strut $x_1$};
\node[below] at (24,0) {\strut $x_2$};
\node[below] at (30,0) {\strut $x_3$};
\node[left] at (10,26.25) {\strut $c\,V$};
\draw[latex-] (2,4) --++ (1.5,6) node {\strut $c^-V_L$};
\node at (27,17.5) {\strut $c^+V_R$};
\node at (13.5,5) {\strut $c^-\hat{V}$};
\node at (14,14) {\strut $c^+\check{V}$};
\end{scope}

\end{tikzpicture}
}
\caption{Representation of the weak solution $u$ in the case $p(\rho) = \left(\frac{1}{\rho}-1\right)^{-1}$, $V(v) = \sinh(v)$ and with the values given in \eqref{e:inivac}.
The profiles of $\rho$, $c\,F_w$ and $c\,V$ are computed at time $t=1$.
Above we take $V_L = V(v(\bW_L))$, $\hat{V} = V(v(\hat{\bW}(\bW_L,\bW_R)))$, $\check{V} = V(v(\check{\bW}(\bW_L,\bW_R)))$, ${\bf0} = (0,w_L)$, $\mathsf{Q}=\mathsf{Q}(\bW_L,\bW_R)$ and $F_w^\pm = c^\pm \, F_w$.
Moreover we let $x_1 = c^- \, \lambda_{w_L}(\rho_L) \approx -1.668$, $x_2 = c^+ \, \lambda_{w_L}(\check{\rho}(\bW_L,\bW_R)) \approx 2.208$ and $x_3 = c^+ \, \lambda_{w_L}(0) \approx 3.526$.}
\label{f:sol-Ber-Sinh}
\end{figure}
\begin{example}
\label{ex:3}
In \figurename~\ref{f:sol-Ber-Sinh} we show the construction of the weak solution $u$ in the case $p(\rho) = \left(\frac{1}{\rho}-1\right)^{-1}$, $V(v) = \sinh(v)$ and with
\begin{align}
\label{e:inivac}
&c^-=2,&
&c^+=3,&
&\rho_L = \frac{2}{5},&
&w_L = \uw \doteq 1,&
&w_R = \frac{13}{10},&
&\rho_R = p^{-1}\left(w_R-\ov^+\right) = \frac{3}{13},
\end{align}
where $\ov^+ = \arcsinh\left(\lambda_{w_L}(0)\right) = \arcsinh\left(\frac{e^2-1}{2 e}\right)$ and $e \approx 2.71828$ is Euler’s number.
In this case $\rho_L > \mathsf{r}_-(w_L)$, $\bW_*(\bW_R,w_L) = (0,w_L)$ and $\rho_*(\bW_R,w_L) < \mathsf{r}_+(w_L)$, therefore by definition, see \eqref{e:Qm}, \eqref{e:Qp} and \eqref{e:Q}, we have
\begin{align*}
&\mathsf{Q}_-(\bW_L) = \mathsf{F}_-(w_L),&
&\mathsf{Q}_+(\bW_R,w_L) = \mathsf{F}_+(w_L),&
&\mathsf{Q}(\bW_L,\bW_R) = c^- \, \mathsf{F}_-(w_L).
\end{align*}
By \ref{i:hat} and \ref{i:check} we have $\hat{\bW}(\bW_L,\bW_R) = ( \mathsf{r}_-(w_L)  , w_L)$ and $\check{\bW}(\bW_L,\bW_R) = ( F_{w_L}^{-1}(\frac{c^-}{c^+} \, \mathsf{F}_-(w_L)) , w_L )$.
Therefore, $u$ has a $1$-rarefaction in the cone $c^- \, \lambda_{w_L}(\rho_L) \leq x/t \leq c^- \, \lambda_{w_L}(\mathsf{r}_-(w_L)) = 0$, a stationary $c$-wave at $x=0$, a $1$-rarefaction in the cone $c^+ \, \lambda_{w_L}(\check{\rho}(\bW_L,\bW_R)) \leq x/t \leq c^+ \, \lambda_{w_L}(0)$ and a $2$-contact discontinuity at $x = c^+ \, V(v(\bW_R))$.
Observe that $V(v(\bW_R)) = \lambda_{w_L}(0) = V(\overline{v}^+)$.
\end{example}

\section{Numerical section}
\label{s:5}

\subsection{Glimm scheme motivation}

If $V$ is the identity and $c$ is constant, then $\RS_{\bar{c}}$ reduces, up to the time rescaling $t\mapsto \bar c \, t$, to the classical Riemann solver of the ARZ model, and the numerical difficulties encountered in simulations of the ARZ model are therefore inherited here. Namely, as observed in \cite{chalonsgoatin}, the domain of the conserved variable $\bU=(\rho,q)$ is not a convex region. Consequently, conservative numerical schemes that average $\rho$ and $q$, such as the Godunov scheme, fail to satisfy the maximum principle for the Riemann invariant $w$ (and, in this constant-$c$ regime, for $v$ as well). Typically, one observes spurious oscillations in the approximation of Riemann problems, whose solution reduces to a single contact discontinuity, that is, when $v$ is constant in the initial datum; see \cite{chalonsgoatin} for a detailed discussion. In \cite{chalonsgoatin} the authors propose a transport-equilibrium scheme that overcomes this difficulty and suggest two further routes to the same end: averaging $\rho$ and $v$ instead of $\rho$ and $q$, which, however, leads to considerably more complicated update formulas, or resorting to a Glimm-like scheme for contacts. In line with this approach—as applied to the ARZ model in \cite{Berthelin2017} and extended to general point constraints on the density flux in \cite{Donadello2026}—we adopt this second route. In these works, the Glimm scheme was shown to accurately capture solutions, including those involving vacuum formation.

When $c$ is not constant, a further difficulty appears which has no counterpart in the classical ARZ model. As already observed in the remarks following Definition~\ref{d:Haken}, even the \emph{exact} Riemann solver $\RS_{c^-,c^+}$ may fail to satisfy the maximum principle in the $v$-coordinate, and the total variation of $(v,w)$ may increase across a single Riemann problem whenever $c^-\neq c^+$, see Section~\ref{s:4} for explicit examples. This is not a numerical artifact but an intrinsic feature of the weak solutions of \eqref{e:system}, due to the interaction of a $w$-contact discontinuity with a $c$-wave. It is therefore essential to approximate \eqref{e:system} by means of a scheme that reproduces every Riemann solution \emph{exactly}, so that this genuine behaviour of the exact solution is not confused with spurious numerical oscillations of the kind described above. This further motivates our choice of the Glimm scheme.

\subsection{Glimm scheme description}
\label{sec:scheme}

Fix a space step $\Delta x>0$, introduce the points $x_{j+1/2} \doteq j \, \Delta x$, the cells $\mathcal{C}_j \doteq [x_{j-1/2},x_{j+1/2})$ and the cell centers $x_j \doteq (j-1/2) \, \Delta x$ for $j\in\Z$. Since $c\in\BV\cap\Lloc1(\R;(0,\infty))$, we approximate it by the piecewise constant function taking on $\mathcal{C}_j$ the value
\[c_j \doteq \frac{1}{\Delta x}\int_{\mathcal{C}_j} c(x) \, \d x.\]
For any integer $n\geq0$ and any $j\in\Z$, the goal is to compute $\bW_j^n \doteq (\rho_j^n,w_j^n)$, an approximation of $\bW(t^n,x_j)$, where $(t^n)_n$ is recursively defined by
\begin{align*}
&t^0 = 0,&
&t^{n+1} = t^n + \Delta t_{n+1},
\end{align*}
with the time step $(\Delta t_n)_n$ given below. Given an initial datum $\bW_o \in \Lloc1(\R;\calW)$, we start by setting $\bW_j^0 \doteq \bW_o(x_j)$.
For a fixed integer $n\geq0$, assuming that $\bW_j^n$ is given for any $j\in\Z$, we define the time step $\Delta t_{n+1}$ so that the following CFL condition is satisfied:
\[\Delta t_{n+1} = \frac{C_{\rm cfl} \, \Delta x}{S_{\max}^n},\]
where $S_{\max}^n$ denotes the maximum wave speed, in absolute value, present in the whole domain at time $t^n$; see Definition~\ref{d:Peryphery} with \eqref{e:lambdaw} and \eqref{e:s}, and the CFL coefficient $C_{\rm cfl}$ satisfies $0<C_{\rm cfl}\leq1/2$. We choose $C_{\rm cfl}=1/2$ for all the simulations of this paper. We use the following procedure to compute $\bW_j^{n+1}$ for any $j\in\Z$.

\begin{itemize}[leftmargin=*]
\item[$\bullet$] As in \cite{Collela_SIAJSSC_1982, Toro1997}, we choose a quasi-random number $\theta_{n}\in[0,1)$ by considering the van der Corput sequence $(\theta_{n})_{n\in\N}$ defined by $\theta_{n} = \sum_{k=0}^m i_k \, 2^{-(k+1)}$, where $(i_0,\ldots,i_m)$, with $i_k\in\{0,1\}$, is the binary expansion of the integer $n$, namely $n = \sum_{k=0}^m i_k \, 2^k$.
\item[$\bullet$] Define $\Delta\nu_{n+1} \doteq \Delta x/\Delta t_{n+1}$. Extending the notation of Definition~\ref{d:Haken} to the diagonal case by setting $\RS_{c^-,c^+} \doteq \RS_{\bar c}$ whenever $c^-=c^+=\bar c$, so that the two Riemann solvers of Definitions~\ref{d:Peryphery} and~\ref{d:Haken} are used consistently at every interface, the updated solution is then computed as
\begin{align*}
\bW_j^{n+1} \doteq
\begin{cases}
\RS_{c_{j-1},c_j}\left[\bW_{j-1}^n,\bW_j^n\right]\left(\theta_{n} \, \Delta\nu_{n+1}\right) &\hbox{if } 0\leq\theta_{n}<1/2,
\\[2pt]
\RS_{c_j,c_{j+1}}\left[\bW_j^n,\bW_{j+1}^n\right]\left((\theta_{n}-1) \, \Delta\nu_{n+1}\right) &\hbox{if } 1/2\leq\theta_{n}\leq1.
\end{cases}
\end{align*}
\end{itemize}

\noindent
Unlike the reconstruction step of the classical Godunov or transport-equilibrium schemes, this procedure never averages $\rho$ and $q$ (or $\rho$ and $v$) across a discretized $c$-wave: it merely samples, cell by cell, the exact self-similar solution of the local Riemann problem, exactly as in the Glimm scheme for the classical ARZ model. In particular, it does not introduce any numerical viscosity, so the size of jump discontinuities is computed exactly, at the price of locating them only ``up to one cell,'' an error which is negligible on fine meshes.

For the numerical simulations presented in the sequel, we work on a bounded computational domain $[x_{1/2},x_{N+1/2}]$, with $N$ mesh cells, and complement the scheme above with Neumann boundary conditions.

\subsubsection*{Remarks on the implementation of the method}

In the Riemann solver implementation, we could in principle pick different pairs of variables: the physical ones $(\rho,v)$, the conservative ones $(\rho,q)$, the Riemann invariants $(v,w)$, or even $(\rho,q)$ together with the auxiliary quantity $\mathsf{Q}$ introduced in \eqref{e:Q}. We perform our simulations in physical variables, as they are the easiest to compare with available data and can be translated into conservative variables or Riemann invariant coordinates in every situation, including at the vacuum. However, to avoid spurious numerical oscillations at the interfaces where $c_j\neq c_{j+1}$, our implementation keeps track of all the state variables $\rho$, $v$, $q$, $w$ and $p(\rho)$ simultaneously.

Indeed, by Definition~\ref{d:Haken} and the remark following it, the Riemann solver $\RS_{c_j,c_{j+1}}$ is obtained by conserving $w$ and equating the fluxes $c^\pm \, F_{w_\pm}(\rho_\pm)$ on both sides of the interface to the common value $\mathsf{Q}(\bW_L,\bW_R)$ given in \eqref{e:Q}. Floating-point round-off in the evaluation of $F_{w}(\rho) = \rho \, V(v(\rho,w))$ may therefore prevent the discrete states from meeting exactly this equality, which might make the scheme oscillate spuriously between the different branches of Definitions~\ref{d:hat} and~\ref{d:check} at each time step.

\subsection{Validation against exact solutions}

We first validate the scheme described in Section~\ref{sec:scheme} on the Riemann problems whose exact solutions have been constructed in Section~\ref{s:4}, namely Example~\ref{ex:1}~\ref{i:sol-log-sinh}, Example~\ref{ex:2}~\ref{i:sol-pow-id-inv} and Example~\ref{ex:3}.
In all three cases $c$ is the piecewise constant function given in \eqref{e:cRiemann}, the computational domain is $[-4,4]$, discretized with $N=256$ cells (so that $\Delta x = 1/32$), and the numerical solution is compared with the exact one at time $t=1$.
In \figurename~\ref{fig:SimEx1} we plot the density $\rho$ and the quantity $c\,V$, which is the actual velocity of the vehicles.

\begin{figure}[!htbp]
    \centering
    \begin{subfigure}[t]{0.31\textwidth}
        \includegraphics[width=1\linewidth]{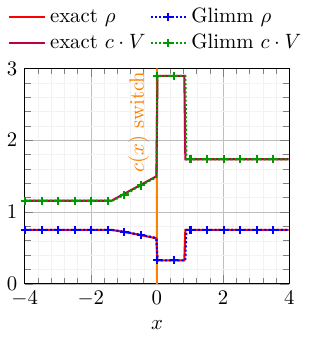}
        \caption{Example~\ref{ex:1}~\ref{i:sol-log-sinh}.}
        \label{fig:SimEx1-1}
    \end{subfigure}
    \hfill
    \begin{subfigure}[t]{0.31\textwidth}
        \includegraphics[width=1\linewidth]{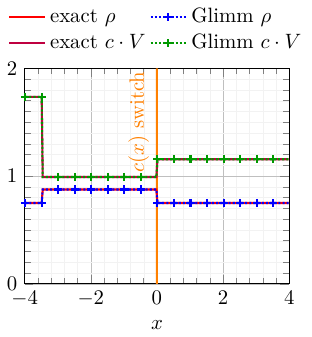}
        \caption{Example~\ref{ex:2}~\ref{i:sol-pow-id-inv}.}
        \label{fig:SimEx1-2}
    \end{subfigure}
    \hfill
    \begin{subfigure}[t]{0.31\textwidth}
        \includegraphics[width=1\linewidth]{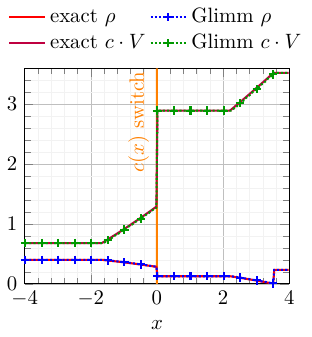}
        \caption{Example~\ref{ex:3}.}
        \label{fig:SimEx1-3}
    \end{subfigure}
    \caption{Comparison between the exact solution (solid lines) and the approximate solution (dotted lines with markers) at time $t=1$, with $N=256$ cells on $[-4,4]$.
    }
    \label{fig:SimEx1}
\end{figure}

The agreement between the approximate and the exact solutions is excellent in all three tests.
As expected from the Glimm scheme, the discontinuities are not smeared: the stationary $c$-wave at $x=0$, the $1$-shocks of Examples~\ref{ex:1} and~\ref{ex:2} and the $2$-contact discontinuity of Example~\ref{ex:3} are all captured with their exact amplitude, and their position is correct up to one cell.
The rarefaction waves are approximated by piecewise constant profiles which sample the exact solution at the cell centers.
No spurious oscillations appear at the interface $x=0$ where $c$ jumps, even though the exact solution itself violates the maximum principle in the $v$-coordinate there, see Examples~\ref{ex:1} and~\ref{ex:2}: the scheme reproduces this genuine increase of the total variation without introducing artificial oscillations.
Finally, \figurename~\ref{fig:SimEx1}\subref{fig:SimEx1-3} shows that the scheme captures the vacuum state $\rho=0$ which appears between the $1$-rarefaction and the $2$-contact discontinuity in Example~\ref{ex:3}, without producing negative densities.
Furthermore, a numerical convergence study performed on these three Riemann problems, see Appendix~\ref{app:convergence}, shows that the relative $\ell^1$-error of the scheme decreases with an approximately first-order rate with respect to the mesh size.

\subsection{Cauchy problem}

We now consider a Cauchy problem for \eqref{e:system} with $p(\rho)=\rho^3$, $V(v)=v$, the constant initial datum $\bW_o \doteq (0.75,1)$ already used in Example~\ref{ex:1}, and the periodic coefficient $c$ with period $1/2$ defined by
$$c(x) \doteq 1 - \brk*{x - \tfrac{1}{2}\lfloor 2x \rfloor},$$
namely $c$ decreases linearly from $1$ to $1/2$ on each interval $[k/2,(k+1)/2)$, $k\in\Z$, and jumps upward from $1/2$ to $1$ at every $x\in\frac{1}{2}\Z$.
The computational domain is $[-1,1]$, discretized with $N=1024$ cells, and the simulation is run up to time $t=1$.
The results are shown in \figurename~\ref{fig:Cauchy1a}.

\begin{figure}[!htbp]
    \centering
    \begin{subfigure}[t]{0.325\textwidth}\centering
        \includegraphics[height=35mm]{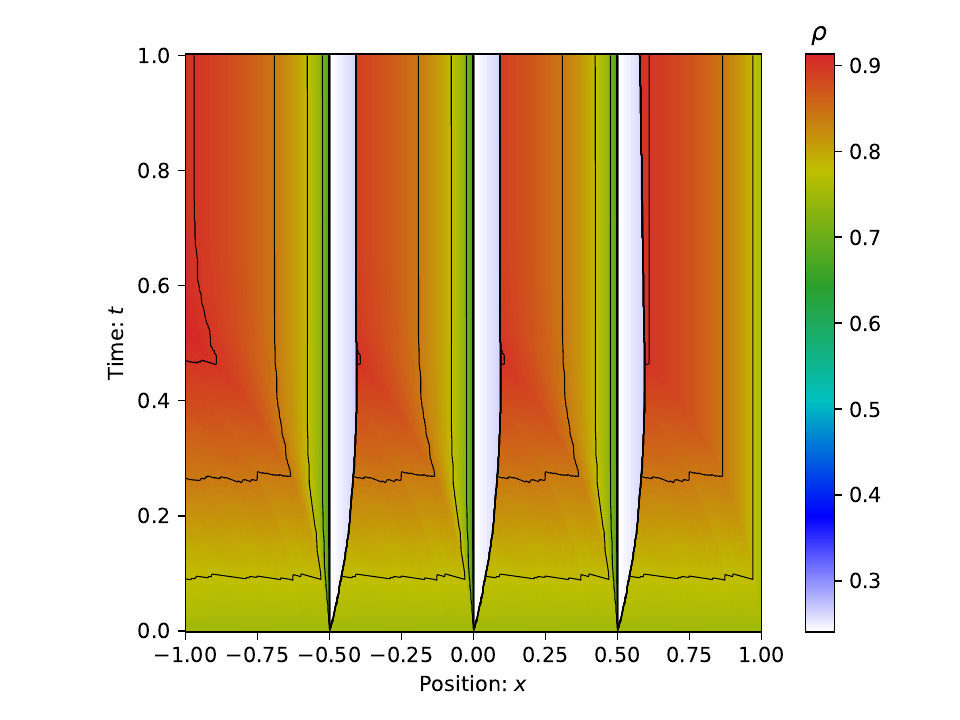}
        \caption{$\rho$ in the $(x,t)$-plane.}
        \label{fig:Cauchy1a-d}
    \end{subfigure}
    \begin{subfigure}[t]{0.325\textwidth}\centering
        \includegraphics[height=35mm]{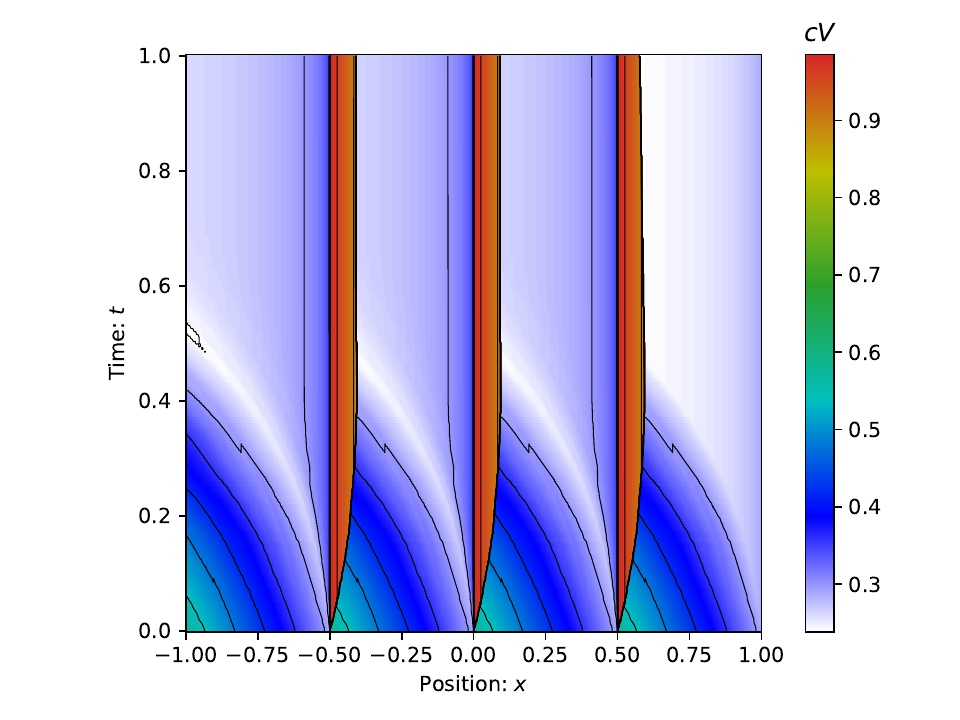}
        \caption{$c\,V$ in the $(x,t)$-plane.}
        \label{fig:Cauchy1a-v}
    \end{subfigure}
    \begin{subfigure}[t]{0.325\textwidth}\centering
        \includegraphics[height=35mm]{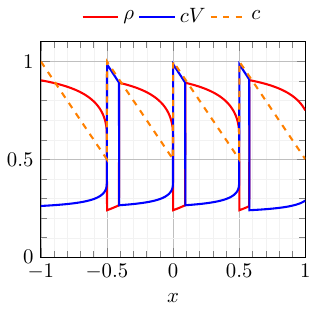}
        \caption{$\rho$, $c\,V$ and $c$ at $t=1$.}
        \label{fig:Cauchy1a-t1}
    \end{subfigure}
    \caption{Numerical solution of the Cauchy problem with $p(\rho)=\rho^3$, $V(v)=v$, constant initial datum $\bW_o=(0.75,1)$ and $c$ periodic with period $1/2$, decreasing linearly from $1$ to $1/2$ on each period. Glimm scheme with $N=1024$ cells on $[-1,1]$.}
    \label{fig:Cauchy1a}
\end{figure}

The initial datum is constant, and $w$ is preserved by the Glimm scheme so $w=w_o = 1$ for all times. Hence \eqref{e:system} reduces to the scalar conservation law $\partial_t\rho + \partial_x\left(c\,F_{w_o}(\rho)\right) = 0$ with discontinuous flux, and the dynamics is driven only by the spatial variations of $c$.
At each upward jump of $c$, the local Riemann problem is the one of Example~\ref{ex:1}~\ref{i:sol-log-sinh}, with $c^-=1/2$ and $c^+=1$.
Therefore, each such jump acts as a bottleneck whose capacity is $c^-\,\mathsf{F}_-(w_o)$, with $c^-=1/2$ and $w_o=1$. Upstream, where $c\,F_{w_o}(\rho_o)$, with $\rho_o=0.75$, exceeds this capacity, vehicles queue and the density increases up to about $0.9$, while it decreases towards the critical value $\mathsf{r}_-(w_o)=4^{-1/3}\approx0.63$ just before the jump. Downstream, a region of low density and high velocity appears, bounded on the right by a $1$-shock.
These shocks slow down and stop, see \figurename~\ref{fig:Cauchy1a}\subref{fig:Cauchy1a-d} and~\subref{fig:Cauchy1a-v}, so that the solution reaches a stationary regime before $t=1$, see \figurename~\ref{fig:Cauchy1a}\subref{fig:Cauchy1a-t1}.
In this regime the flux $c\,F_{w}(\rho)$ is constant in space and equal to $0.2367$, which coincides, up to discretization errors, with the capacity $c^-\,\mathsf{F}_-(w_o) \approx 0.2362$ of the bottlenecks, i.e.\ with the value of $\mathsf{Q}$ in Example~\ref{ex:1}~\ref{i:sol-log-sinh}.
Only the last period $[0.5,1]$ is affected by the Neumann boundary condition at $x=1$, where the flux keeps its initial value $c(1^-)\,F_{w_o}(\rho_o)\approx0.217$.


\providecommand{\bysame}{\leavevmode\hbox to3em{\hrulefill}\thinspace}
\providecommand{\MR}{\relax\ifhmode\unskip\space\fi MR }
\providecommand{\MRhref}[2]{%
	\href{http://www.ams.org/mathscinet-getitem?mr=#1}{#2}
}
\providecommand{\href}[2]{#2}

\adresa
Felisia Angela Chiarello, Department of Engineering, University LUM, S.S.~100 km~18, 70010 Casamassima, Italy,
e-mail: chiarello@lum.it

Simone Fagioli, Department of Engineering and Information Science and Mathematics, University of L’Aquila, Via Vetoio, Ed.~Coppito~1, 67100 Coppito, Italy,
e-mail: simone.fagioli@univaq.it

Bastien Polizzi, Laboratoire de Mathématiques de Besançon, Université Marie et Louis Pasteur, CNRS, LmB (UMR 6623), F-25000 Besançon, France,
e-mail: bastien.polizzi@math.cnrs.fr

Massimiliano Daniele Rosini, Faculty of Mathematics and Computer Science, Maria Curie-Skłodowska University, Plac Marii Curie-Skłodowskiej~1, 20031 Lublin, Poland, and Department of Management and Business Administration, University \lq\lq G.~d'Annunzio\rq\rq\ of Chieti-Pescara, viale Pindaro~42, 65127 Pescara, Italy,
e-mail: massimiliano.rosini@mail.umcs.pl


\appendix
\section{Numerical convergence}
\label{app:convergence}

To assess the numerical convergence of the Glimm scheme described in Section~\ref{sec:scheme}, we study the evolution of the relative $\ell^{1}$-error between the numerical solutions and the exact solutions of the three Riemann problems considered in \figurename~\ref{fig:SimEx1}, namely Example~\ref{ex:1}~\ref{i:sol-log-sinh}, Example~\ref{ex:2}~\ref{i:sol-pow-id-inv} and Example~\ref{ex:3}, for varying mesh sizes.
In all cases the computational domain is $[-4,4]$, $c$ jumps from $c^-$ to $c^+$ at $x=0$, and the number of mesh cells ranges from $N=2^6$ to $N=2^{18}$.

Using the notation introduced in Section~\ref{sec:scheme}, for any given quantity $u\in\{\rho,c\,V\}$, we denote by $u_{j}^{n}$ the approximate value on the cell $\mathcal{C}_{j}$ at time $t^{n}$ of $u(t^{n}, x_{j})$.
The relative $\ell^{1}$-error at time $t^n$ for $u$ is then
\begin{equation}
\label{e:FormulaError}
E_{\ell^{1}}^{n}(u) \doteq \Biggl(\sum_{j=1}^{N}\abs*{u\brk*{t^{n}, x_{j}}-u_{j}^{n}}\Biggr) \Big/ \Biggl(\sum_{j=1}^{N}\abs*{u\brk*{t^{n}, x_{j}}}\Biggr).
\end{equation}
The errors computed at time $t=1$ are reported in \figurename~\ref{fig:ConvEx}, together with their least-squares linear regression in logarithmic scale.
In all three tests the scheme converges with an approximately first-order rate $1$: the convergence rate is $0.93$ for both $\rho$ and $c\,V$ in Example~\ref{ex:1}~\ref{i:sol-log-sinh}, $1.00$ for both $\rho$ and $c\,V$ in Example~\ref{ex:2}~\ref{i:sol-pow-id-inv}, and $0.87$ for $\rho$ and $0.86$ for $c\,V$ in Example~\ref{ex:3}.
We also note that the solution of Example~\ref{ex:2}~\ref{i:sol-pow-id-inv} only consists of a stationary $c$-wave and a $1$-shock: for $N=256$ the computed error vanishes; hence this value is neither represented in \figurename~\ref{fig:ConvEx}\subref{fig:ConvEx-2} nor taken into account in the regression.

\begin{figure}[htbp]
    \centering
    \begin{subfigure}[t]{0.31\textwidth}
        \includegraphics[width=1\linewidth]{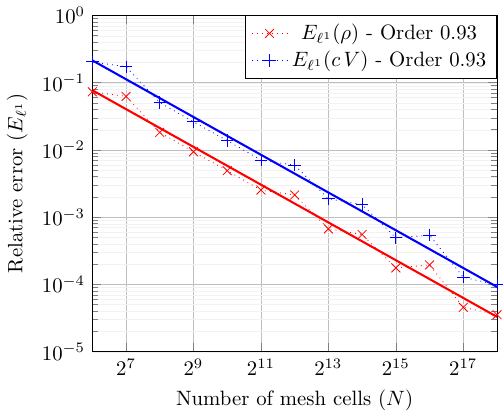}
        \caption{Example~\ref{ex:1}~\ref{i:sol-log-sinh}. The convergence rate is $0.93$ for $\rho$ and for $c\,V$.}
        \label{fig:ConvEx-1}
    \end{subfigure}
    \hfill
    \begin{subfigure}[t]{0.31\textwidth}
        \includegraphics[width=1\linewidth]{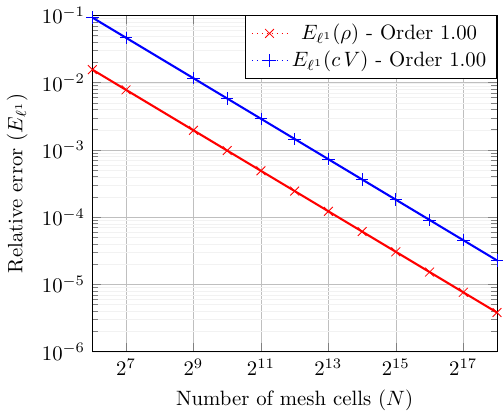}
        \caption{Example~\ref{ex:2}~\ref{i:sol-pow-id-inv}. The convergence rate is $1.00$ for $\rho$ and for $c\,V$.}
        \label{fig:ConvEx-2}
    \end{subfigure}
    \hfill
    \begin{subfigure}[t]{0.31\textwidth}
        \includegraphics[width=1\linewidth]{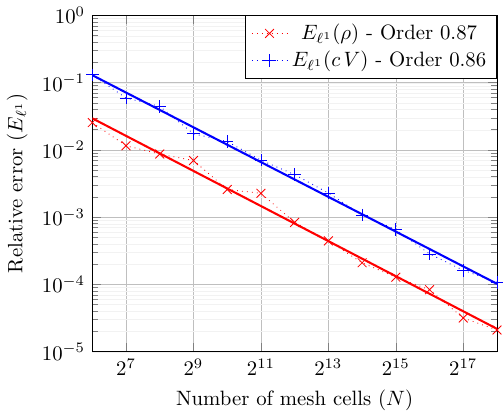}
        \caption{Example~\ref{ex:3}. The convergence rate is $0.87$ for $\rho$ and $0.86$ for $c\,V$.}
        \label{fig:ConvEx-3}
    \end{subfigure}
    \caption{Convergence analysis of the Glimm scheme: relative error in $\ell^{1}$-norm, see \eqref{e:FormulaError}, at time $t=1$ for the density $\rho$ and the velocity $c\,V$ with respect to the number of mesh cells $N$. Markers correspond to the computed errors and solid lines to their linear regression in logarithmic scale.}
    \label{fig:ConvEx}
\end{figure}
\end{document}